\newtheorem{theorem}{Theorem}[section]
\newtheorem{proposition}[theorem]{Proposition}
\newtheorem{corollary}[theorem]{Corollary}
\newtheorem{definition}[theorem]{Definition}
\numberwithin{equation}{section}
\theoremstyle{definition}
\newtheorem{example}[theorem]{Example}
\newcommand{\cp}{\mathbb{C}P}
\newcommand{\zz}{\mathbb{Z}}
\newcommand{\qq}{\mathbb{Q}}
\begin{document}

\title{The  Hirzebruch genera of complete intersections}{\thanks{This project was supported by the Natural Science Foundation of Tianjin City of China, No. 19JCYBJC30300.}}

\author{Jianbo Wang, Zhiwang Yu, Yuyu Wang}
\address{(Jianbo Wang) School of Mathematics, Tianjin University, Tianjin 300350, China}
\email{wjianbo@tju.edu.cn}
\address{(Zhiwang Yu) School of Mathematics, Tianjin University, Tianjin 300350, China}
\email{yzhwang@tju.edu.cn}
\address{(Yuyu Wang) College of Mathematical Science, Tianjin Normal University, Tianjin 300387, China}
\email{wdoubleyu@aliyun.com}

\begin{abstract}
Following Brooks's calculation of the $\hat{A}$-genus of complete intersections, a new and more computable formula about the $\hat{A}$-genus and $\alpha$-invariant will be described as polynomials of multi-degree and dimension. We also give an iterated formula of $\hat{A}$-genus and the necessary and sufficient conditions for the vanishing of $\hat{A}$-genus of complex even dimensional spin complete intersections. Finally, we obtain a general formula about the Hirzebruch genus of complete intersections, and calculate some classical Hirzebruch genera as examples. 
\end{abstract}

\keywords{Hirzebruch genus, $\hat{A}$-genus, alpha invariant, virtual Hirzebruch genus, complete intersection.}
\date{\today}
\maketitle

\section{Introduction}

A genus of a multiplicative sequence is a ring homomorphism, from the ring of smooth compact manifolds (up to suitable cobordism) to another ring. A multiplicative sequence is completely determined by its characteristic power series $Q(x)$. Moreover, every power series $Q(x)$ determines a multiplicative sequence. Given a power series $Q(x)$, there is an associated Hirzebruch genus, which is also denoted as a $\varphi^{}_Q$-genus. For more details, please see \Cref{sec-genus}. The focus of this paper is mainly on the description of the Hirzebruch genus of complete intersections with multi-degree and dimension. 

A complex $n$-dimensional {\bf complete intersection}
$X_n(d_1,\dots,d_r)\subset \cp^{n+r}$ is a smooth complex $n$-dimensional manifold given by a transversal intersection of $r$ nonsingular hypersurfaces in
complex projective space. The unordered $r$-tuple $\underline{d}:=(d_1,\dots,d_r)$
is called the \emph{multi-degree}, which denotes the degrees of the $r$ nonsingular hypersurfaces, and the product $d:=d_1d_2\cdots d_r$ is
called the  \emph{total degree}. It is well-known that the diffeomorphism type of the real $2n$-dimensional manifold $X_n(d_1,\dots,d_r)$ depends only on the multi-degree and dimension. For the abbreviation of notation, set $X_n(\underline{d}):=X_n(d_1,\dots,d_r)$. By the Lefschetz hyperplane section theorem, the inclusion $X_n(\underline{d}) \subset \mathbb{C}P^{n+r}$ is $n$-connected. Let $x\in H^2(X_{n}(\underline{d});\zz)$ be the pullback of the first Chern class of the Hopf line bundle $H$ (the dual bundle of canonical line bundle) over $\cp^{n+r}$, the total Chern class and total Pontrjagin class of $X_{n}(\underline{d})$ (\cite{Libgober&Wood1982}) are given by
\begin{align}
c(X_{n}(\underline{d})) & =(1+x)^{n+r+1} \cdot \prod_{i=1}^{r}\left(1+d_{i} \cdot x\right)^{-1},\label{totalChern}\\
p(X_{n}(\underline{d})) & =\left(1+x^{2}\right)^{n+r+1} \cdot \prod_{i=1}^{r}\left(1+d_{i}^{2} \cdot x^{2}\right)^{-1}.\label{totalPont}
\end{align}
In particular, the first Chern class is 
\begin{equation}\label{c1p1}
c_{1}(X_{n}(\underline{d})) =\Big(n+r+1-\sum_{i=1}^r d_{i}\Big) \cdot x.
\end{equation}
In addition, one can show that the evaluation of $x^n$ on the fundamental class of $X_n(\underline{d})$ is the total degree $d=d_1\cdots d_r$:
\[
\int_{X_n(\underline{d})}x^n=d.
\]
 Note that, for a complete intersection $X_{n}(\underline{d})$,  the $i$-th Pontrjagin class $p_{i}$ must be an integral multiple of $x^{2i},$ where $x \in H^{2}(X_{n}(\underline{d}); \mathbb{Z}) \cong \mathbb{Z}$ is a generator ($n\ne 2$). This multiple is independent of the choice of the generator $x$ since $p_{i}\in H^{4i}(X_{n}(d); \mathbb{Z})$, so we can compare the Pontrjagin classes of complete intersections with the same dimension and different multi-degrees. Related to the diffeomorphism classification of complete intersections, there is the following conjecture, often called the Sullivan conjecture after Dennis Sullivan. 

\vskip 2mm
\noindent {\bf Sullivan conjecture}. When $n \ne 2$, two different complete intersections $X_n(\underline{d})$ and $X_n(\underline{d}^\prime)$ are diffeomorphic if and only if they have the same total degree, Pontrjagin classes and Euler characteristics.

\vskip 2mm
The partial known results on the Sullivan conjecture for complete intersections can be found in \cite{FK, FW, K, T}. The latest progress by Crowley and Nagy proves the Sullivan conjecture for $n = 4$ in \cite{CrowNa2020}.

Brooks \cite{Br1983} calculated the $\hat{A}$-genus of complex hypersurfaces $X_{2n}(d_1)$ and complete intersections $X_{2n}(\underline{d})$ with $\underline{d}=(d_1,\dots,d_r)$:
\begin{align}
\hat{A}(X_{2n}(d_1)) & =\frac{2}{(2n+1)!}\prod_{j=-n}^n\left(\frac{d_1}{2}-j\right), \label{Ahathypersurface} \\
\hat{A}(X_{2n}(\underline{d})) & =\frac{1}{2\pi\sqrt{-1}}\oint_{\varGamma(0)}(1+z)^{n-\frac{1}{2}-\sum\limits_{i=1}^r{\frac{d_i-1}{2}}}\cdot\frac{\prod_{i=1}^r{((1+z)^{d_i}-1)}}{z^{2n+r+1}}d z, \label{AhatCI}
\end{align}
where $\varGamma(0)$ denotes a small circle around $0$, and $\displaystyle\frac{1}{2\pi\sqrt{-1}}\oint_{\varGamma(0)}f(z)d z$ means the residue of $f(z)$ at $z=0$. Note that the $\hat{A}$-genus of complex odd dimensional complete intersection is zero.
\eqref{Ahathypersurface} can also be found in \cite[page 298]{Lawson1989}. Evidently,  \eqref{Ahathypersurface} is better relative to \eqref{AhatCI}. One starting point for writing this paper is to give an explicit description of $\hat{A}(X_{2n}(\underline{d}))$, which might be simpler and more computable than \eqref{AhatCI}. After all, \eqref{AhatCI} looks more cumbersome. The first main theorem in this paper is as follows:
\begin{theorem}\label{main-Ahatgenus}
The $\hat{A}$-genus of complete intersection $X_{2n}(\underline{d})=X_{2n}(d_1,\dots,d_r)$ is
\begin{equation*}
\hat{A}(X_{2n}(\underline{d}))=\sum_{j=0}^r(-1)^{r-j}\sum_{1\leqslant{k_1}<\dots<{k_j}\leqslant r}{\dbinom{\frac{1}{2}c_1-1 +d_{k_1}+\dots+d_{k_j}}{2n+r}},
\end{equation*}
where $c_1=2n+r+1-\sum\limits_{i=1}^r{d_i}$, i.e. $c_1\cdot x$ is the first Chern class $c_1(X_{2n}(\underline{d}))$ as in \eqref{c1p1},  $\dbinom{a}{k}:=\displaystyle\frac{a(a-1)(a-2)\cdots(a-k+1)}{k!}$, and $d_{k_1}+\dots+d_{k_j}$ vanishes  if $j=0$.
\end{theorem}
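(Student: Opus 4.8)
The plan is to start directly from Brooks's integral formula \eqref{AhatCI} and convert the residue into the stated finite sum of generalized binomial coefficients. The first step is a bookkeeping observation about the exponent of $(1+z)$ in \eqref{AhatCI}: since
\[
n-\tfrac12-\sum_{i=1}^{r}\tfrac{d_i-1}{2}=n+\tfrac{r-1}{2}-\tfrac12\sum_{i=1}^{r}d_i,
\]
and $c_1=2n+r+1-\sum_{i=1}^{r}d_i$ by \eqref{c1p1}, this exponent equals precisely $\tfrac12 c_1-1$. Thus $\hat A(X_{2n}(\underline d))$ is the residue at $z=0$ of $(1+z)^{\frac12 c_1-1}\prod_{i=1}^{r}\big((1+z)^{d_i}-1\big)\big/z^{2n+r+1}$.

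Next I would expand the product over subsets of $\{1,\dots,r\}$:
\[
\prod_{i=1}^{r}\big((1+z)^{d_i}-1\big)=\sum_{S\subseteq\{1,\dots,r\}}(-1)^{r-|S|}(1+z)^{\sum_{i\in S}d_i},
\]
obtained by choosing, in the $i$-th factor, either $(1+z)^{d_i}$ (when $i\in S$) or $-1$ (when $i\notin S$). Substituting this into the integrand and using linearity of the residue over this finite sum,
\[
\hat A(X_{2n}(\underline d))=\sum_{S\subseteq\{1,\dots,r\}}(-1)^{r-|S|}\,\mathrm{Res}_{z=0}\,\frac{(1+z)^{\frac12 c_1-1+\sum_{i\in S}d_i}}{z^{2n+r+1}}.
\]

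The third step evaluates each residue. For any rational number $a$ (which need not be a non-negative integer, in particular when $X_{2n}(\underline d)$ is not spin) and any non-negative integer $m$, the residue at $z=0$ of $(1+z)^{a}/z^{m+1}$ is the coefficient of $z^{m}$ in the binomial series $(1+z)^{a}=\sum_{k\ge 0}\binom{a}{k}z^{k}$, namely $\binom{a}{m}$ in the sense of the generalized binomial coefficient defined in the statement. Applying this with $a=\tfrac12 c_1-1+\sum_{i\in S}d_i$ and $m=2n+r$, and then re-indexing the subset sum by $j=|S|$ together with the increasing tuple $1\le k_1<\dots<k_j\le r$ enumerating $S$, produces exactly the claimed formula; the $j=0$ term is the single contribution $(-1)^{r}\binom{\frac12 c_1-1}{2n+r}$, consistent with the convention that the empty sum $d_{k_1}+\dots+d_{k_j}$ is $0$.

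I do not anticipate a real obstacle: the only genuine input is the simplification of the exponent to $\tfrac12 c_1-1$, after which everything is formal manipulation of residues and binomial series; one should, however, be careful to interpret $(1+z)^a$ as a power series (not a polynomial) so that the residue–coefficient identity applies for non-integral or negative $a$, and to note that the alternating sum is still well defined as a rational number. As a sanity check one can specialize to $r=1$: the formula collapses to $\binom{n+d_1/2}{2n+1}-\binom{n-d_1/2}{2n+1}$, and a short computation identifies this with $\tfrac{2}{(2n+1)!}\prod_{j=-n}^{n}\big(\tfrac{d_1}{2}-j\big)$, recovering \eqref{Ahathypersurface}.
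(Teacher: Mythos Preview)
Your argument is correct and essentially identical to the paper's first proof: both start from Brooks's residue formula \eqref{AhatCI}, rewrite the exponent of $(1+z)$ as $\tfrac12 c_1-1$, expand $\prod_{i=1}^r\big((1+z)^{d_i}-1\big)$ over subsets, and read off the coefficient of $z^{2n+r}$ as the generalized binomial coefficient. The paper also offers a second proof that computes the $\hat A$-class directly from the Chern/Pontrjagin data of $X_{2n}(\underline d)$ before passing to a residue, but the logical content is the same.
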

Combine the above theorem, we can give the $\alpha$-invariant of complete intersections $X_n(d_1,\dots,d_r)$ with $c_1=n+r+1-\sum\limits_{i=1}^rd_i$ even.
\begin{theorem}\label{main-alphainvariant}
Let $\underline{d}=(d_1,\dots,d_r)$. Then for a complete intersection $X_n(\underline{d})$ with $c_1=n+r+1-\sum\limits_{i=1}^rd_i$ even, the $\alpha$-invariant $\alpha(X_n(\underline{d}))$ is equal to
\begin{enumerate}
\item $\hat{A}(X_n(\underline{d}))$, if $n= 0\pmod 4$;
\item $\displaystyle\sum_{0\leqslant j\leqslant r-1\atop 1\leqslant k_1<\dots<k_j\leqslant r-1}\dbinom {\frac{1}{2}c_1-1+d_{k_1}+\dots+d_{k_j}}{n+r}\pmod 2$, if $n= 1\pmod 4$;
\item $\frac{1}{2}\hat{A}(X_n(\underline{d}))$, if $n= 2\pmod 4$;
\item $0$, if $n= 3\pmod 4$.
\end{enumerate}
\end{theorem}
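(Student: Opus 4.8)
The plan is to identify $\alpha(X_n(\underline d))$ with the $KO$-theoretic index of the spin Dirac operator and to evaluate it separately in each residue class of $n$ modulo $4$, using Theorem~\ref{main-Ahatgenus} as the computational engine. Since $H^2(X_n(\underline d);\zz)\cong\zz$ is torsion-free, the hypothesis that $c_1$ is even is equivalent to $w_2(X_n(\underline d))=0$, so $X_n(\underline d)$ is spin and $\alpha(X_n(\underline d))\in KO^{-2n}(\mathrm{pt})$ is defined (the spin structure being the unique one for $n\ge 3$). By the Atiyah--Bott--Shapiro/Atiyah--Singer description of $KO^{-2n}(\mathrm{pt})$ this index equals $\hat{A}(X_n(\underline d))$, or $\tfrac12\hat{A}(X_n(\underline d))$, or the $\zz/2$-valued mod $2$ index of the real Dirac operator, or $0$, according as $n\equiv 0,2,1,3\pmod 4$.

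Three of the four cases are then immediate. If $n\equiv 3\pmod 4$ then $2n\equiv 6\pmod 8$ and $KO^{-2n}(\mathrm{pt})=0$, which is (4). If $n\equiv 0\pmod 4$ then $\alpha(X_n(\underline d))=\hat{A}(X_n(\underline d))$, which is exactly the expression of Theorem~\ref{main-Ahatgenus} applied with complex dimension $n$ (writing $m:=n/2$, the lower index of the binomial coefficients is $2m+r=n+r$ and the $c_1$ there agrees with the present $c_1$); this is (1). If $n\equiv 2\pmod 4$ then $2n\equiv 4\pmod 8$; the complex spinor bundle of $X_n(\underline d)$ carries a quaternionic structure, so the kernel and cokernel of the Dirac operator are $\mathbb H$-modules, $\hat{A}(X_n(\underline d))$ is even, and the $KO$-index is $\tfrac12\hat{A}(X_n(\underline d))$, again supplied by Theorem~\ref{main-Ahatgenus}; this is (3).

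The substance is case (2), $n\equiv 1\pmod 4$, where $2n\equiv 2\pmod 8$, $\hat{A}(X_n(\underline d))=0$, and $\alpha$ is the $\zz/2$-valued mod $2$ index. I would realize $X_n(d_1,\dots,d_r)$ as the zero locus $V(d_r\!\cdot\!x')$ of a section of the degree-$d_r$ line bundle over $Y:=X_{n+1}(d_1,\dots,d_{r-1})\subset\cp^{\,n+r}$, a complex manifold of real dimension $2n+2\equiv 4\pmod 8$. By \eqref{c1p1}, $c_1(Y)=(c_1+d_r)x'$, so since $c_1$ is even one has $w_2(Y)\equiv d_r x'\equiv\mathrm{PD}_Y[X_n(\underline d)]\pmod 2$; consequently $X_n(\underline d)$ is spin inside $Y$ and $Y$ carries a spin$^{c}$ structure $\mathfrak s$ with $c_1(\mathfrak s)=-d_r x'$. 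The key input is the mod $2$ comparison
\[
\alpha(X_n(\underline d))\ \equiv\ \operatorname{ind}\big(D^{\mathfrak s}_Y\big)\pmod 2 ,
\]
between the $\zz/2$-valued $KO$-index of the spin Dirac operator on the divisor and the integer spin$^{c}$ Dirac index of the ambient $Y$ --- an instance of the Atiyah--Singer mod $2$ index theorem, which one can also see by localizing the spin$^{c}$ index of $Y$ along $X_n(\underline d)$. Granting this, the index theorem gives $\operatorname{ind}(D^{\mathfrak s}_Y)=\langle e^{-d_r x'/2}\,\hat{A}(Y),[Y]\rangle$, which I would evaluate exactly as Theorem~\ref{main-Ahatgenus} is deduced from Brooks's formula \eqref{AhatCI}: push forward to $\cp^{\,n+r}$ and substitute $1+z=e^{x'}$ to rewrite it as the residue
\[
\frac{1}{2\pi\sqrt{-1}}\oint_{\varGamma(0)}(1+z)^{\frac12 c_1-1}\,\frac{\prod_{i=1}^{r-1}\big((1+z)^{d_i}-1\big)}{z^{\,n+r+1}}\,dz ,
\]
then expand $\prod_{i=1}^{r-1}((1+z)^{d_i}-1)=\sum_{S\subseteq\{1,\dots,r-1\}}(-1)^{(r-1)-|S|}(1+z)^{\sum_{i\in S}d_i}$ and read off each residue as $\binom{\frac12 c_1-1+\sum_{i\in S}d_i}{n+r}$; reducing modulo $2$ absorbs the signs and produces exactly the sum in (2).

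I expect the mod $2$ comparison displayed above to be the main obstacle: one must pin down the precise relation between the spin structure induced on the divisor $X_n(\underline d)$ and the spin$^{c}$ structure $\mathfrak s$ on $Y$ (including the sign in $c_1(\mathfrak s)=-d_rx'$, which is what makes the residue match (2) rather than a shifted binomial sum), and verify that the resulting integer is genuinely the mod $2$ index. A secondary point to settle is that the construction, and the formula in (2), single out the degree-$d_r$ hypersurface although $\alpha(X_n(\underline d))$ depends only on the unordered multi-degree; this symmetry is automatic once the comparison is in place (any $d_j$ may play the role of $d_r$), but it can also be verified directly as a congruence of binomial sums modulo $2$.
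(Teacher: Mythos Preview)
Your argument is correct and matches the paper's own treatment closely. Cases (1), (3), (4) are handled identically (the paper cites Hitchin \cite[Prop.~4.2]{Hitchin1974} and Bott periodicity). For case (2), the mod~$2$ comparison you single out as the main obstacle,
\[
\alpha(X_n(\underline d))\ \equiv\ \big\langle e^{\pm d_r x'/2}\,\hat A(Y),[Y]\big\rangle\pmod 2\qquad\big(Y=X_{n+1}(d_1,\dots,d_{r-1})\big),
\]
is exactly formula~(5) of Fang--Shao \cite{Fang&Shao}, which the paper simply quotes rather than re-derives; from there your residue computation (substitute $\omega=e^{z}-1$, expand the product, read off binomial coefficients) is the paper's second proof line by line. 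The paper also gives a first proof of (2) that starts instead from Fang--Shao's formula~(8), a sum over all $\pm$ sign choices of $d_1,\dots,d_{r-1}$, and rewrites it algebraically into the stated binomial sum; this is a cosmetic repackaging of the same content. Your remark on the symmetry in the distinguished degree $d_r$ is likewise anticipated: the paper notes (citing \cite[Remark~2]{Fang&Shao}) that the sign on $d_r$ is irrelevant mod~$2$ because $\binom{\frac{n-1}{2}+k}{n}\equiv\binom{\frac{n-1}{2}-k}{n}\pmod 2$.
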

Note that for $n>1$, the complete intersection $X_n(\underline{d})$ is spin if and only if $c_1$ is even. The complex one dimensional complete intersection $X_1(\underline{d})$ is a closed orientable surface with the second Stiefel-Whitney class $\omega_2=0$, and hence is spin. 

In \cite{Hi1978}, Hirzebruch defined the virtual index and virtual generalized Todd genus for a virtual submanifold $(v_{1}, \dots, v_{r})$ of $M$, where $v_{1}, \dots, v_{r}\in H^2(M;\zz)$, and $M$ is a compact oriented manifold or a compact almost complex manifold. Similarly, we define the virtual Hirzebruch genus of $(v_{1}, \dots, v_{r})$, which is equal to the Hirzebruch genus of a submanifold $V$ with codimension $2r$ of $M$. As an application, an iterated formula about $\hat{A}$-genus of complete intersections is given as follows:
\begin{theorem}\label{mainAhatIM}
For any positive integers $d_1,\dots,d_r$, assume that $d_{r-1}\geqslant d_r\geqslant 2$, then we have
\begin{equation*}
\hat{A}(X_{2n}(d_1,\dots,d_{r-1},d_r))=\sum_{k=0}^{d_r-1}\hat{A}(X_{2n}(d_1,\dots,d_{r-2},d_{r-1}+d_r-1-2k)).
\end{equation*}
\end{theorem}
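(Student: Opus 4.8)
The plan is to compare both sides using Brooks's residue formula \eqref{AhatCI} and to reduce everything to a one-line identity of rational functions. Write $t=1+z$ and set
\begin{equation*}
A(z):=(1+z)^{\,n-\frac12-\sum_{i=1}^{r-2}\frac{d_i-1}{2}}\cdot\frac{\prod_{i=1}^{r-2}\bigl((1+z)^{d_i}-1\bigr)}{z^{\,2n+r}},
\end{equation*}
the part of the integrand in \eqref{AhatCI} not involving $d_{r-1}$ or $d_r$ (note that $z^{2n+r}$ is exactly the power in the denominator of \eqref{AhatCI} for an $(r-1)$-tuple of degrees). Then \eqref{AhatCI} for the $r$-tuple $(d_1,\dots,d_{r-1},d_r)$ reads
\begin{equation*}
\hat{A}(X_{2n}(d_1,\dots,d_{r-1},d_r))=\frac{1}{2\pi\sqrt{-1}}\oint_{\varGamma(0)}A(z)\cdot t^{-\frac{d_{r-1}-1}{2}-\frac{d_r-1}{2}}\cdot\frac{(t^{d_{r-1}}-1)(t^{d_r}-1)}{z}\,dz,
\end{equation*}
while \eqref{AhatCI} for the $(r-1)$-tuple $(d_1,\dots,d_{r-2},D_k)$ with $D_k:=d_{r-1}+d_r-1-2k$ reads
\begin{equation*}
\hat{A}(X_{2n}(d_1,\dots,d_{r-2},D_k))=\frac{1}{2\pi\sqrt{-1}}\oint_{\varGamma(0)}A(z)\cdot t^{-\frac{D_k-1}{2}}\,(t^{D_k}-1)\,dz.
\end{equation*}
Since taking the residue at $z=0$ is linear, the theorem reduces to the single identity
\begin{equation*}
t^{-\frac{d_{r-1}-1}{2}-\frac{d_r-1}{2}}\cdot\frac{(t^{d_{r-1}}-1)(t^{d_r}-1)}{z}=\sum_{k=0}^{d_r-1}t^{-\frac{D_k-1}{2}}\,(t^{D_k}-1).
\end{equation*}

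Proving this identity is the elementary core of the argument. Multiplying through by $t^{\frac{d_{r-1}-1}{2}+\frac{d_r-1}{2}}$ and using $\tfrac{d_{r-1}-1}{2}+\tfrac{d_r-1}{2}-\tfrac{D_k-1}{2}=k$, $k+D_k=d_{r-1}+d_r-1-k$, and $z=t-1$, the claim becomes
\begin{equation*}
\frac{(t^{d_{r-1}}-1)(t^{d_r}-1)}{t-1}=\sum_{k=0}^{d_r-1}\bigl(t^{\,d_{r-1}+d_r-1-k}-t^{k}\bigr).
\end{equation*}
The right-hand side equals $\bigl(t^{d_{r-1}}+t^{d_{r-1}+1}+\cdots+t^{d_{r-1}+d_r-1}\bigr)-\bigl(1+t+\cdots+t^{d_r-1}\bigr)=(t^{d_{r-1}}-1)\dfrac{t^{d_r}-1}{t-1}$, the left-hand side; this is just a finite geometric series manipulation. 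Feeding it back into the displays above proves the theorem. The hypothesis $d_{r-1}\geqslant d_r\geqslant 2$ enters only to guarantee $D_k\geqslant d_{r-1}-d_r+1\geqslant 1$ for $0\leqslant k\leqslant d_r-1$, so every term on the right is a genuine complete intersection; a stray factor of degree $1$, if it occurs, is harmless, since \eqref{AhatCI} then just reproduces the complete intersection with that factor deleted.

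A more conceptual variant runs through the virtual Hirzebruch genus of the paragraph preceding the statement. Regarding $X_{2n}(d_1,\dots,d_r)$ as the submanifold of $\cp^{\,2n+r}$ dual to $(d_1x,\dots,d_rx)$, multiplicativity of $\hat{A}$ gives $\hat{A}(X_{2n}(\underline{d}))=\int_{\cp^{2n+r}}\hat{A}(T\cp^{2n+r})\prod_{i=1}^{r}2\sinh(d_ix/2)$, where $\hat{A}(T\cp^{N})=\bigl(\tfrac{x/2}{\sinh(x/2)}\bigr)^{N+1}$. The engine is then the trigonometric counterpart of the identity above,
\begin{equation*}
2\sinh\tfrac{d_{r-1}x}{2}\cdot 2\sinh\tfrac{d_rx}{2}=2\sinh\tfrac{x}{2}\cdot\sum_{k=0}^{d_r-1}2\sinh\tfrac{(d_{r-1}+d_r-1-2k)x}{2},
\end{equation*}
again a finite geometric series (substitute $2\sinh(ax/2)=q^{a}-q^{-a}$, $q=e^{x/2}$). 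Combining it with $\hat{A}(T\cp^{N})\cdot 2\sinh(x/2)=x\cdot\hat{A}(T\cp^{N-1})$ and $\int_{\cp^{N}}x\cup(-)=\int_{\cp^{N-1}}(-)$ lowers the ambient dimension by one and rewrites $\hat{A}(X_{2n}(\underline{d}))$ as the asserted sum of the $\hat{A}$-genera of the $X_{2n}(d_1,\dots,d_{r-2},D_k)$.

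I do not expect a real obstacle: the whole proof rests on one finite geometric series. The only delicate point is the exponent bookkeeping — keeping track of how the power of $z$ in the denominator of \eqref{AhatCI} drops by one, and how the fractional power of $1+z$ shifts by exactly $k$, when the number of prescribed degrees passes from $r$ to $r-1$ — and it is precisely this bookkeeping that forces the shifts $D_k=d_{r-1}+d_r-1-2k$ rather than the naive guess $d_{r-1}+d_r$.
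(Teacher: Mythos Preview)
Your argument is correct. Both routes you sketch are sound: the exponent bookkeeping in the first approach checks out exactly, and the $\sinh$ summation in the second is a valid identity (indeed the substitution $q=e^{x/2}$ reduces it to the same geometric series).

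The paper proceeds differently. It first isolates a \emph{two-term} identity for $R(x)=2\sinh(x/2)$,
\[
R(u+w)R(v+w)=R(u)R(v)+R(u+v+w)R(w),
\]
and deduces the two-term recursion
\[
\hat{A}(X_{2n}(d_1,\dots,d_{r-1},d_r))=\hat{A}(X_{2n}(d_1,\dots,d_{r-1}-1,d_r-1))+\hat{A}(X_{2n}(d_1,\dots,d_{r-2},d_{r-1}+d_r-1)),
\]
then iterates $d_r-1$ times until the last slot drops to $1$. Your second approach lives in the same virtual-genus framework but collapses that iteration into a single closed-form $\sinh$ sum; your first approach bypasses the virtual-genus machinery altogether and works straight from Brooks's residue formula \eqref{AhatCI}, which is arguably more elementary. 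The paper's version has the advantage that the two-term $\sinh$ addition law is memorable and reusable (it is exactly the lemma used elsewhere in the text), while your direct computation gets to the answer faster without introducing an intermediate recursion.
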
 
Brooks shows that, for a spin complete intersections $X_{2n}(d_1,\dots,d_r)$, its $\hat{A}$-genus vanishes if and only if $c_1> 0$ (\cite{Br1983}), where $c_1=2n+r+1-\sum\limits_{i=1}^rd_i$. We can use \Cref{mainAhatIM} to give a new proof for the vanishing of $\hat{A}(X_{2n}(d_1,\dots,d_r))$.

As another application of virtual Hirzebruch genus, we give a general description of any Hirzebruch genus of complete intersections:
\begin{theorem}\label{main-VHgenusCI}
For the given power series  $Q(x)=x/R(x)$, the Hirzebruch genus $\varphi^{}_Q$ on the complete intersection $X_n(\underline{d})$ with $\underline{d}=(d_1,\dots,d_r)$ satisfies that:
\begin{equation*}
\varphi^{}_Q(X_n(\underline{d}))=\frac{1}{2\pi\sqrt{-1}}\oint_{\varGamma(0)}\frac{\prod_{i=1}^r{R(d_iz)}}{{(R(z))}^{n+r+1}}d z.
\end{equation*}
\end{theorem}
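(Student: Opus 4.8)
The plan is to deduce \Cref{main-VHgenusCI} from the virtual Hirzebruch genus: by its very definition $X_n(\underline{d})$ is a transversal intersection of $r$ nonsingular hypersurfaces of degrees $d_1,\dots,d_r$ in $M:=\cp^{n+r}$, hence a genuine submanifold realizing the virtual submanifold $(d_1x,\dots,d_rx)$, where $x=c_1(H)$. Recall that $\varphi^{}_Q$ attaches to a bundle $E$ with formal Chern roots $\{a_k\}$ the multiplicative class $K_Q(E)=\prod_k Q(a_k)$, that $\varphi^{}_Q(N)=\langle K_Q(TN),[N]\rangle$ for a closed almost complex manifold $N$, and that one normalizes $Q(0)=1$; consequently $R(x)=x/Q(x)$ has a simple zero at the origin, so the integrand $\prod_{i=1}^rR(d_iz)/R(z)^{n+r+1}$ has a pole of order $n+1$ at $z=0$ and the residue on the right-hand side is a well-defined formal residue.

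First I would record the two inputs coming from the embedding $j\colon X_n(\underline{d})\hookrightarrow M$. From $TM\oplus\underline{\mathbb{C}}\cong H^{\oplus(n+r+1)}$ we get $K_Q(TM)=Q(x)^{n+r+1}$. Since the $i$-th hypersurface is the transversal zero locus of a section of $H^{\otimes d_i}$, the normal bundle of $X_n(\underline{d})$ is $\nu=\bigoplus_{i=1}^r\big(H^{\otimes d_i}\big)\big|_{X_n(\underline{d})}$, whence $K_Q(\nu)=\prod_{i=1}^r Q(d_ix)$ and $e(\nu)=\prod_{i=1}^r d_ix$; moreover this Euler class is the Poincar\'e dual of $[X_n(\underline{d})]$ in $M$. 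Feeding the normal bundle sequence $0\to TX_n(\underline{d})\to TM|_{X_n(\underline{d})}\to\nu\to 0$ into the multiplicativity of $K_Q$ gives
\[
K_Q\big(TX_n(\underline{d})\big)=j^{*}\!\left(\frac{Q(x)^{n+r+1}}{\prod_{i=1}^r Q(d_ix)}\right),
\]
read inside $H^{*}(M;\qq)$, where the expression on the right is a genuine (finite) class because $x^{n+r+1}=0$ there.

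Next I would push forward along $j$. Using the projection formula $j_!j^{*}(\alpha)=\alpha\cup e(\nu)$ and evaluating on the fundamental class yields
\[
\varphi^{}_Q\big(X_n(\underline{d})\big)=\Big\langle\frac{Q(x)^{n+r+1}}{\prod_{i=1}^r Q(d_ix)}\cdot\prod_{i=1}^r d_ix,\ [\cp^{n+r}]\Big\rangle,
\]
and, since $d_ix/Q(d_ix)=R(d_ix)$, this equals $\big\langle Q(x)^{n+r+1}\prod_{i=1}^r R(d_ix),\,[\cp^{n+r}]\big\rangle$. Because $\langle x^{n+r},[\cp^{n+r}]\rangle=1$, the latter is the coefficient of $x^{n+r}$ in $Q(x)^{n+r+1}\prod_{i=1}^r R(d_ix)$. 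Substituting $Q(x)=x/R(x)$ turns this into the coefficient of $x^{n+r}$ in $x^{n+r+1}\prod_{i=1}^r R(d_ix)\big/R(x)^{n+r+1}$, i.e. the coefficient of $x^{-1}$ in $\prod_{i=1}^r R(d_ix)\big/R(x)^{n+r+1}$, which is exactly $\operatorname{Res}_{z=0}\big(\prod_{i=1}^r R(d_iz)/R(z)^{n+r+1}\big)$, the asserted contour integral.

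I do not expect a genuine obstacle: the computation is a few lines once the virtual Hirzebruch genus apparatus is in place. The only points deserving care are bookkeeping ones — checking that the multiplicative class of the virtual bundle $TM|_{X_n(\underline{d})}-\nu$ really is the quotient $Q(x)^{n+r+1}/\prod_iQ(d_ix)$ (legitimate since one may work with formal power series in $x$, equivalently in the truncated cohomology rings, where only degrees up to $2n$ are relevant), and matching formal coefficient extraction with the notation $\tfrac{1}{2\pi\sqrt{-1}}\oint_{\varGamma(0)}$ fixed in the introduction, for which the normalization $Q(0)=1$ — guaranteeing the correct pole order — is the crucial hypothesis. A reader wishing to bypass virtual submanifolds can instead start from \eqref{totalChern}, which gives $K_Q(TX_n(\underline{d}))=Q(x)^{n+r+1}/\prod_iQ(d_ix)$ at once, and then combine $\int_{X_n(\underline{d})}x^n=d$ with the identity $Q(x)^{n+r+1}/\prod_iQ(d_ix)=(x^n/d)\cdot\prod_iR(d_ix)/R(x)^{n+r+1}$ to arrive at the same residue.
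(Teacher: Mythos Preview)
Your argument is correct and follows essentially the same route as the paper: both identify $X_n(\underline{d})$ with the virtual submanifold $(d_1x,\dots,d_rx)$ of $\cp^{n+r}$, arrive at $\int_{\cp^{n+r}}\prod_i R(d_ix)\cdot Q(x)^{n+r+1}$, and then read this off as a residue. The only cosmetic difference is that the paper invokes its already--established \Cref{VRKCI} and \Cref{VK} to pass directly to that integral, whereas you unpack the underlying push--forward (the projection formula $j_!j^*\alpha=\alpha\cup e(\nu)$) explicitly; your closing remark about bypassing virtual submanifolds via \eqref{totalChern} and $\int_{X_n(\underline{d})}x^n=d$ is also a valid shortcut.
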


When this paper is near completion, we notice Baraglia's paper \cite{Baraglia2020}.  Baraglia also give the similar description of the $\hat{A}$-genus and alpha invariant of complete intersections by the different methods. Ours results in \Cref{main-Ahatgenus} and \ref{main-alphainvariant} are equivalent to the results in \cite[Theorem 1.3 \& 1.2]{Baraglia2020}.

This paper is organized as follows. In \Cref{sec-genus}, we introduce some necessary concepts and properties on Hirzebruch genus. In \Cref{sec-A-genus}, we prove \Cref{main-Ahatgenus} and \Cref{main-alphainvariant}. In \Cref{virtualgenus}, the virtual Hirzebruch genus is defined. As an application of virtual genus,  we prove \Cref{mainAhatIM} and determine when the $\hat{A}$-genus of complex even dimensional spin complete intersections vanishes. In \Cref{sec-Hir-genus},  \Cref{main-VHgenusCI} is proved by using the virtual Hirzebruch genus of complete intersections, and we calculate some classical Hirzebruch genera of complete intersections as examples.

\section{Hirzebruch genus}\label{sec-genus}

For references about the Hirzebruch genus, \cite[Appendix E]{BuchPan15}, \cite{Hi1978} and \cite[\S 1.6-1.8]{HiBeJu1992}  are recommended.
Unless otherwise stated, all the manifolds mentioned in this paper are smooth, oriented, closed, and connected. A general real $n$-dimensional manifold is denoted by $M^n$. We omit the real dimension $n$ when there is no danger of ambiguity. If $M$ admits an almost complex structure, it must be even dimensional. For an almost complex manifold $M^{2n}$,  let $TM$ be the complex tangent $n$-vector bundle, and $(TM)_\mathbb{R}$ be the underlying  real  tangent $2n$-vector bundle of $M$. For a differentiable manifold $M^{n}$,  $TM$ is the tangent $n$-vector bundle of $M$.

Every homomorphism $\varphi: \Omega^U\to \Lambda$ from the complex bordism ring to a commutative ring $\Lambda$ with unit can be regarded as a multiplicative characteristic of manifolds which is an invariant of bordism classes. Such a homomorphism is called a {\bf {(complex) $\Lambda$-genus}}. The term “multiplicative genus” is also used, to emphasize that such a genus is a ring homomorphism.

Assume that $\Lambda$ does not have additive torsion, then every $\Lambda$-genus is fully determined by the corresponding homomorphism $\Omega^U\otimes\mathbb{Q}\rightarrow \Lambda\otimes\mathbb{Q}$, which is also denoted by $\varphi$. Every homomorphism $\varphi: \Omega^U\otimes\mathbb{Q}\rightarrow \Lambda\otimes\mathbb{Q}$ can be interpreted as a sequence of homogeneous polynomials $\{K_i(c_1,\dots,c_i),i\geqslant0\}$, $\deg K_i=2i$, with certain imposed conditions. The conditions may be described as follows: an identity
\[
1+c_1+c_2+\cdots=(1+c_1^\prime+c_2^\prime+\cdots)\cdot(1+c_1^{\prime\prime}+c_2^{\prime\prime}+\cdots)
\]
implies that
\begin{equation}\label{MS}
\sum_{n\geqslant0}K_n(c_1,\dots,c_n)=\sum_{i\geqslant0}K_i(c_1^\prime,\dots,c_i^\prime)\cdot\sum_{j\geqslant0}K_j(c_1^{\prime\prime},\dots,c_j^{\prime\prime}).
\end{equation}
A sequence of homogeneous polynomials $\mathcal{K}=\{K_i(c_1,\dots,c_i),i\geqslant0\}$ with $K_0=1$ is called {\bf {multiplicative Hirzebruch sequence}} if $\mathcal{K}$ satisfies the \eqref{MS}. Write the following abbreviated notation:
\[
K(c_1,c_2,\dots):=1+K_1(c_1)+K_2(c_1,c_2)+\cdots,
\]
then \eqref{MS} is
\[
K(c_1,c_2,\dots)=K(c_1^\prime,c_2^\prime,\dots)\cdot K(c_1^{\prime\prime},c_2^{\prime\prime},\dots).
\]
Moreover, a multiplicative sequence  $\mathcal{K}$ is one-to-one correspondence with a power series $Q(x)=1+q_1x+q_2x^2+\cdots\in \Lambda\otimes\mathbb{Q}[[x]]$, where the notation $\Lambda\otimes\qq[[x]]$ means the ring of power series in $x$ over the ring $\Lambda\otimes\qq$, $x=c_1$, and $q_i=K_i(1,0,\dots,0)$ (see \cite[Appendix E]{BuchPan15} or \cite[\S 1]{Hi1978}), i.e. $Q(x)=1+K_1(x,0,\dots,0)+K_2(x,0,\dots,0)+\cdots$.

By \eqref{MS}, an equality
\[
1+c_1+\cdots+c_n=(1+x_1)\cdots(1+x_n)
\]
implies the equality
\begin{align*}
Q(x_1)\cdots Q(x_n) & =1+K_1(c_1)+\cdots+K_n(c_1,\dots,c_n)+K_{n+1}(c_1,\dots,c_n,0)+\cdots\\
& = K(c_1,\dots,c_n).
\end{align*}
It follows that the $n$-th term $K_{n}(c_{1}, \ldots, c_{n})$ in the multiplicative Hirzebruch sequence $\mathcal{K}=\{K_i(c_1,\dots,c_i)\}$ corresponding to a genus $\varphi: \Omega^{U} \otimes \mathbb{Q} \rightarrow \Lambda\otimes \mathbb{Q}$ is the degree $2n$ part of the series $\prod_{i=1}^{n} Q(x_i) \in \Lambda\otimes \mathbb{Q}[[c_{1}, \ldots, c_{n}]]$. Follow the statement in \cite[Appendix E]{BuchPan15},  the series $\prod_{i=1}^{n}Q(x_i)$ can be regarded as a universal characteristic class of a complex $n$-vector bundles.
For example, for a complex $n$-vector bundle $\xi$ with the total Chern class $c(\xi)=1+c_1+\cdots+c_n=(1+x_1)\cdots (1+x_n)$, where $x_1,\dots,x_n$ are the formal roots of $c(\xi)$, we can define the $\bm{\varphi^{}_Q}$-{\bf class} of $\xi$ by
\begin{equation}
\varphi^{}_Q(\xi)=K(c_1,\dots,c_n)=\prod\limits_{i=1}^nQ(x_i).
\end{equation}
\begin{definition}\label{varphiQclassC}
Let $Q(x)$ be a power series with constant term being one.
For an  almost complex manifold $M^{2n}$, let $x_1,\dots,x_n$ be the formal roots of the total Chern class of $TM$, i.e., $c(TM)=(1+x_1)\cdots (1+x_n)$. The {\bf{Hirzebruch genus}} $\varphi^{}_Q$ corresponding to $Q(x)$  is defined by
\begin{equation*}
\varphi^{}_Q(M)=\int_M\varphi^{}_Q(TM)=\int_M\prod_{i=1}^nQ(x_i),
\end{equation*}
where $\int_M\varphi^{}_Q(TM)=\left\langle\varphi^{}_Q(TM),[M]\right\rangle$ is the evaluation of the degree $2n$ part of $\varphi^{}_Q(TM)$ on the fundamental class $[M]$ of $M$.
\end{definition}
A parallel theory of genera exists for oriented manifolds.
These genera are homomorphisms $\Omega^{SO}\rightarrow\Lambda$ from the oriented bordism ring to $\Lambda$.
Once again we assume that the ring $\Lambda$ does not have additive torsion.
Then every oriented $\Lambda$-genus $\varphi$ is determined by the corresponding homomorphism $\Omega^{SO}\otimes\mathbb{Q}\rightarrow \Lambda\otimes\mathbb{Q}$, which we also denote by $\varphi$.
Such genera are in one-to-one correspondence with even power series $Q(x)=1+q_2x^2+q_4x^4+\cdots\in\Lambda\otimes\mathbb{Q}[[x]]$.

Similarly as the complex case above, for a real $n$-vector bundle $\xi$ with the total Pontrjagin class $p(\xi)=1+p_1+\cdots+p^{}_{\left[\frac{n}{4}\right]}=(1+x_1^2)(1+x_2^2)\cdots (1+x_{\left[\frac{n}{4}\right]}^2)$, we can define the $\bm{\varphi^{}_Q}$-{\bf class} of $\xi$ by
\begin{equation}\label{phiQclassPontr}
\varphi^{}_Q(\xi)=K(p_1,\dots,p^{}_{\left[\frac{n}{4}\right]})=\prod\limits_{i=1}^{\left[\frac{n}{4}\right]}Q(x_i).
\end{equation}
\begin{definition}\label{varphiQclassP}
For a compact, oriented, differentiable manifold $M$ of dimension $n$,  the {\bf Hirzebruch genus} $\varphi^{}_Q$ corresponding to an even power series $Q(x)$ is defined by
\[
\varphi^{}_Q(M)=\int_M\varphi^{}_Q(TM)=\int_M\prod\limits_{i=1}^{\left[\frac{n}{4}\right]}Q(x_i),
\]
where the $x_1^2,\dots,x_{\left[\frac{n}{4}\right]}^2$ are the formal roots of the total Pontrjagin class of $TM$, i.e., $p(TM)=(1+x_1^2)\cdots (1+x_{\left[\frac{n}{4}\right]}^2)$.
\end{definition}
	
An oriented genus $\varphi: \Omega^{SO}\rightarrow\Lambda$ defines a complex genus given by the composition $\Omega^{U}\rightarrow\Omega^{SO}\stackrel{\varphi}{\longrightarrow}\Lambda$ with the homomorphism $\Omega^{U}\rightarrow\Omega^{SO}$ forgetting the stably complex structure.  Since $\Omega^{U}\rightarrow\Omega^{SO}$ is onto modulo torsion, and $\Lambda$ is assumed to
be torsion-free, one loses no information by passing from an oriented genus to a
complex one. On the other hand, a complex genus $\varphi:\Omega^{U}\rightarrow\Lambda$ factors through an
oriented genus $\Omega^{SO}\rightarrow\Lambda$ whenever its corresponding power series $Q(x)$ is even.

\section{The $\hat{A}$-genus  and $\alpha$-invariant of complete intersections}\label{sec-A-genus}

For a compact oriented differentiable manifold  $M^{4k}$, Hirzebruch (\cite[Page 197]{Hi1978}) defined the $\hat{A}$-sequence of $M^{4k}$ as a certain polynomials in the Pontrjagin classes of $M^{4k}$. More concretely, the power series
\[
Q(x)=\frac{\frac{1}{2}x}{\sinh \left(\frac{1}{2}x\right)}=1-\frac{x^2}{24}+\frac{7x^4}{5760}-\frac{31x^6}{967680}+\frac{127x^8}{154828800}+\cdots
\]
defines a multiplicative sequence $\left\{\hat{A}_{j}(p_{1}, \ldots, p_{j})\right\}$. For example, the first four terms
of the sequence are as follows (for abbreviation, $p_j:=p_j(M)=p_j(TM)$):
\begin{align*}
\hat{A}_{1} & =-\frac{p_{1}}{24}, \\
\hat{A}_{2} & =\frac{7p_{1}^{2}-4 p_{2}}{5760}, \\
\hat{A}_{3} & =\frac{-31 p_{1}^{3}+44 p_{1} p_{2}-16 p_{3}}{967680}, \\
\hat{A}_{4} & =\frac{381 p_{1}^{3}-904 p_{1}^{2} p_{2}+208 p_{2}^{2}+512 p_{1} p_{3}-192 p_{4}}{464486400}.
\end{align*}
The $\hat{A}$-genus $\hat{A}(M)$ is the $\varphi^{}_Q$-genus with the power series $Q(x)$ as above, and 
\[
\hat{A}(M)=\int_M \hat{A}(p_1,\dots,p_k),
\]
where $\hat{A}(p_1,\dots,p_k)=\varphi^{}_Q(TM)$ is the $\hat{A}$-class defined as in  \Cref{sec-genus}.

It is a result of Lichnerowicz that the $\hat{A}$-genus must vanish for a compact spin manifold which admits a Riemannian metric with positive scalar curvature (see \cite{Lichnerowicz1963}).  In \cite{Hitchin1974}, Hitchin generalized this result to $\alpha(M) = 0$ if $M$ has a metric of positive scalar curvature, where $\alpha: \Omega^{Spin}_*\to KO^{-*}(pt)$ is a homomorphism, and the $\alpha$-invariant $\alpha(M)\in KO^{-n}(pt)$ is a $KO$-characteristic number for a spin manifold $M$ of dimension $n$. This is a strict generalization of the result of Lichnerowicz since $\alpha(M)=\hat{A}(M)$  if $n= 0 \pmod 8$ and $\alpha(M)=\frac{1}{2}\hat{A}(M)$ if $n= 4 \pmod 8$.
Furthermore, Stolz showed that a spin manifold $M$ of dimension $n\geqslant 5$ admits a positive scalar curvature metric if and only if $\alpha(M)$ vanishes (see \cite{Stolz1990}), which proved the Gromov-Lawson conjecture.  By virtue of the $\alpha$-invariants of complex $4k+1$-dimensional complete intersection $X_{4k+1}(\underline{d})$ ($k\geqslant 1$) and Seiberg-Witten theory in complex dimension 2, Fang and Shao in \cite{Fang&Shao} gave a classification list of complete intersections admitting Riemannian metrics with positive scalar curvature.

The $\hat{A}$-genus is a special case of elliptic genera. There are many results concerning the $\hat{A}$-genus which are related with group actions (see \cite{Kawakubo1991}).
From the Atiyah-Singer index theorem, the $\hat{A}(M)$ can be interpreted as the index of the Dirac operator acting on spin-bundles over $M$.
A profound development of the classical result by Atiyah-Hirzebruch (see \cite{AtHi70}): If $M$ is connected spin and admits a nontrivial smooth $S^1$-action, then the $\hat{A}$-genus $\hat{A}(M)$ vanishes.

Based on the Brooks's calculation \eqref{AhatCI}, we can obtain a more computable formula about the $\hat{A}$-genus of complex even dimensional complete intersection.
\begin{theorem}\label{Ahatgenus}
The $\hat{A}$-genus of complete intersection $X_{2n}(\underline{d})=X_{2n}(d_1,\dots,d_r)$ is
\begin{equation*}
\hat{A}(X_{2n}(\underline{d}))=\sum_{j=0}^r(-1)^{r-j}\sum_{1\leqslant{k_1}<\dots<{k_j}\leqslant r}{\dbinom{\frac{1}{2}c_1-1 +d_{k_1}+\dots+d_{k_j}}{2n+r}},
\end{equation*}
where $c_1=2n+r+1-\sum\limits_{i=1}^r{d_i}$, i.e. $c_1\cdot x$ is the first Chern class $c_1(X_{2n}(\underline{d}))$ as in \eqref{c1p1},  $\dbinom{a}{k}:=\displaystyle\frac{a(a-1)(a-2)\cdots(a-k+1)}{k!}$, and  $d_{k_1}+\dots+d_{k_j}$ vanishes  if $j=0$.
\end{theorem}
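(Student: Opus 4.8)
The plan is to derive the closed formula directly from Brooks's residue expression \eqref{AhatCI} by three elementary manipulations: simplifying the fractional exponent using the defining relation for $c_1$, expanding the product over the hypersurface degrees by inclusion--exclusion, and recognizing each resulting residue as a generalized binomial coefficient.

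First I would rewrite the exponent $n-\frac12-\sum_{i=1}^r\frac{d_i-1}{2}$ appearing in \eqref{AhatCI}. Since $c_1=2n+r+1-\sum_{i=1}^r d_i$, we have $\sum_{i=1}^r d_i=2n+r+1-c_1$, and a one-line computation shows that this exponent equals $\frac12 c_1-1$. Thus
\[
\hat{A}(X_{2n}(\underline{d}))=\frac{1}{2\pi\sqrt{-1}}\oint_{\varGamma(0)}(1+z)^{\frac{1}{2}c_1-1}\,\frac{\prod_{i=1}^r\big((1+z)^{d_i}-1\big)}{z^{2n+r+1}}\,dz,
\]
where $(1+z)^{\frac12 c_1-1}$ is to be read as the formal power series $\sum_{k\geqslant0}\binom{\frac12 c_1-1}{k}z^k$; this is harmless since only the residue at $z=0$ is needed, and it is precisely what makes the half-integer binomial coefficients in the statement natural.

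Next I would expand $\prod_{i=1}^r\big((1+z)^{d_i}-1\big)=\sum_{S\subseteq\{1,\dots,r\}}(-1)^{r-|S|}(1+z)^{\sum_{i\in S}d_i}$, choosing in each factor either the term $(1+z)^{d_i}$ or the term $-1$. Interchanging this finite sum with the residue, each summand becomes the residue at $z=0$ of $(1+z)^{\frac12 c_1-1+\sum_{i\in S}d_i}/z^{2n+r+1}$, which is just the coefficient of $z^{2n+r}$ in $(1+z)^{\frac12 c_1-1+\sum_{i\in S}d_i}$, that is, $\binom{\frac12 c_1-1+\sum_{i\in S}d_i}{2n+r}$. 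Regrouping the subsets $S$ by their cardinality $j=|S|$ and writing $S=\{k_1<\dots<k_j\}$ then produces exactly the asserted formula, together with the sign $(-1)^{r-j}$ and the convention that the degree sum is empty when $j=0$. There is no serious obstacle here: the only points demanding care are the bookkeeping of the inclusion--exclusion signs and the reduction of the fractional exponent to $\frac12 c_1-1$, after which the identification of the residues with binomial coefficients is immediate.
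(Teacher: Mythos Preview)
Your proof is correct and follows essentially the same approach as the paper's first proof: starting from Brooks's residue formula \eqref{AhatCI}, simplifying the fractional exponent to $\tfrac12 c_1-1$, expanding the product $\prod_i((1+z)^{d_i}-1)$ by inclusion--exclusion, and reading off each residue as a generalized binomial coefficient. The only difference is cosmetic ordering (you simplify the exponent before expanding, the paper after), and the paper also supplies an independent second proof computing the $\hat A$-class directly from the Pontrjagin data.
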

\begin{proof}
Let $\varGamma(0)$ denote a small circle around $0$. By \eqref{AhatCI}
\begin{align*}
&~~~~ \hat{A}(X_{2n}(\underline{d}))=\frac{1}{2\pi\sqrt{-1}}\oint_{\varGamma(0)}(1+z)^{n-\frac{1}{2}-\sum\limits_{i=1}^r{\frac{d_i-1}{2}}}\cdot\frac{\prod\limits_{i=1}^r{((1+z)^{d_i}-1)}}{z^{2n+r+1}}d z \\
&= \frac{1}{2\pi\sqrt{-1}}\oint_{\varGamma(0)}\frac{1}{z^{2n+r+1}}\cdot\sum_{j=0}^r{{(-1)^{r-j}}\sum_{1\leqslant{k_1}<\dots<{k_j}\leqslant r}{{(1+z)^{n-\frac{1}{2}-\sum\limits_{i=1}^r{\frac{{d_i}-1}{2}}+d_{k_1}+\dots+d_{k_j}}}}}d z \\
&= \frac{1}{2\pi\sqrt{-1}}\oint_{\varGamma(0)}\frac{1}{z^{2n+r+1}}\cdot\sum_{j=0}^r{{(-1)^{r-j}}\sum_{1\leqslant{k_1}<\dots<{k_j}\leqslant r}{{(1+z)^{\frac{1}{2}(2n+r+1-\sum\limits_{i=1}^r{d_i})-1 +d_{k_1}+\dots+d_{k_j}}}}}d z \\
&= \frac{1}{2\pi\sqrt{-1}}\oint_{\varGamma(0)}\frac{1}{z^{2n+r+1}}\cdot\sum_{j=0}^r{{(-1)^{r-j}}\sum_{1\leqslant{k_1}<\dots<{k_j}\leqslant r}{{(1+z)^{\frac{1}{2}c_1-1+d_{k_1}+\dots+d_{k_j}}} }}d z.
\end{align*}
Thus, by the residue theorem, the $\hat{A}$-genus of $X_{2n}(\underline{d})$ is the coefficient of $z^{2n+r}$ in the double summation $\sum\limits_{j=0}^r{(-1)^{r-j}}\sum\limits_{1\leqslant{k_1}<\dots<{k_j}\leqslant r}{{(1+z)^{\frac{1}{2}c_1-1+d_{k_1}+\dots+d_{k_j}}}}$. We have
\begin{equation*}
\hat{A}(X_{2n}(\underline{d}))=\sum_{j=0}^r{{(-1)^{r-j}}\sum_{1\leqslant{k_1}<\dots<{k_j}\leqslant r}{\dbinom{\frac{1}{2}c_1-1 +d_{k_1}+\dots+d_{k_j}}{2n+r}}}.
\end{equation*}
Thus  \Cref{Ahatgenus} is done.
\end{proof}
We can give another proof of \Cref{Ahatgenus} by calculating the $\hat{A}$-genus directly. 
\begin{proof}[2nd Proof of \Cref{Ahatgenus}]
Firstly, for any almost complex manifold $M$, the $\hat{A}$-class of the underlying real tangent bundle $(TM)_\mathbb{R}$ is
\begin{equation}\label{ahatclass}
\hat{A}((TM)_\mathbb{R})=\sum_{k=0}^\infty{\hat{A}_k(p_1,\dots,p_k)}.
\end{equation}
By \eqref{totalPont} and \eqref{phiQclassPontr}, the $\hat{A}$-class of $(T(X_{n}(\underline{d})))_\mathbb{R}$ is
\begin{align}
\hat{A}((T(X_{n}(\underline{d})))_\mathbb{R})=& \prod_{i=1}^r\frac{\sinh\left(\frac{d_i}{2}x\right)}{\frac{d_i}{2}x}\cdot\left(\frac{\frac{1}{2}x}{\sinh\left(\frac{1}{2}x\right)}\right)^{n+r+1} \nonumber \\
=& \prod_{i=1}^r\frac{\exp\left(\frac{d_i}{2}x\right)-\exp\left(-\frac{d_i}{2}x\right)}{d_ix}\cdot\left(\frac{x}{\exp\left(\frac{1}{2}x\right)-\exp\left(-\frac{1}{2}x\right)}\right)^{n+r+1} \nonumber \\
=& \frac{1}{d}x^{n+1}\cdot\frac{\prod_{i=1}^r{\left(\exp\left(\frac{d_i}{2}x\right)-\exp\left(-\frac{d_i}{2}x\right)\right)}}{{\left(\exp\left(\frac{1}{2}x\right)-\exp\left(-\frac{1}{2}x\right)\right)}^{n+r+1}} \nonumber \\
=& \frac{1}{d}x^{n+1}\cdot\exp\left(\frac{1}{2}(n+r+1)x-\frac{1}{2}\sum_{i=1}^rd_ix\right)\cdot\frac{\prod_{i=1}^r{\left(\exp\left(d_ix\right)-1\right)}}{{\left(\exp\left(x\right)-1\right)}^{n+r+1}} \nonumber \\
=& \frac{1}{d}x^{n+1}\cdot\exp\left(\frac{1}{2}c_1x\right)\cdot\frac{\prod_{i=1}^r{\left(\exp\left(d_ix\right)-1\right)}}{{\left(\exp\left(x\right)-1\right)}^{n+r+1}},\label{AhatclassCI}
\end{align}
where $d=d_1\cdots d_r$ is the total degree of $X_n(\underline{d})$ and $c_1=n+r+1-\sum\limits_{i=1}^rd_i$. 

By \eqref{AhatclassCI}, we have 
\begin{equation*}
\hat{A}((T(X_{2n}(\underline{d})))_\mathbb{R})=\frac{1}{d}x^{2n+1}\cdot\exp\left(\frac{1}{2}c_1x\right)\cdot\frac{\prod_{i=1}^r{\left(\exp\left(d_ix\right)-1\right)}}{{\left(\exp\left(x\right)-1\right)}^{2n+r+1}}. 
\end{equation*}
Then 
\begin{align*}
\hat{A}(X_{2n}(\underline{d}))=& \int_{X_{2n}(\underline{d})} \hat{A}((T(X_{2n}(\underline{d})))_\mathbb{R}) \\
=& \int_{X_{2n}(\underline{d})} \frac{1}{d}x^{2n+1}\cdot\frac{\exp\left(\frac{1}{2}c_1x\right)}{(\exp(x)-1)^{2n+r+1}}\prod_{i=1}^r(\exp(d_ix)-1). 
\end{align*}

Since what we are interested in is some coefficient in a polynomial, we use residue theorem to calculate this integral. Notice that $\int_{X_{2n}(\underline{d})}x^{2n}=d$ is the total degree of the complete intersection $X_{2n}(\underline{d})$, then we have 
\begin{align*}
\hat{A}(X_{2n}(\underline{d}))&= \frac{1}{2\pi\sqrt{-1}}\oint_{\varGamma(0)}\frac{\exp\left(\frac{1}{2}c_1z\right)}{(\exp(z)-1)^{2n+r+1}}\prod_{i=1}^r(\exp(d_iz)-1)d z \\
&= \frac{1}{2\pi\sqrt{-1}}\oint_{\varGamma(0)}\frac{\exp\left((\frac{1}{2}c_1-1)z\right)}{(\exp(z)-1)^{2n+r+1}}\prod_{i=1}^r(\exp(d_iz)-1)d\exp(z).
\end{align*}
By means of variable substitution: $\omega=\exp(z)-1$,
\begin{align*}
\hat{A}(X_{2n}(\underline{d}))&= \frac{1}{2\pi\sqrt{-1}}\oint_{\varGamma(0)}\frac{(1+\omega)^{\frac{1}{2}c_1-1}}{\omega^{2n+r+1}}\prod_{i=1}^r((1+\omega)^{d_i}-1)d \omega\\
&= \sum_{j=0}^r{{(-1)^{r-j}}\sum_{1\leqslant{k_1}<\dots<{k_j}\leqslant r}\dbinom{\frac{1}{2}c_1-1 +d_{k_1}+\dots+d_{k_j}}{2n+r}}.
\end{align*}
Again, \Cref{Ahatgenus} follows.
\end{proof}
The alpha invariant $\alpha(M)\in KO^{-n}(pt)$ is a $KO$-characteristic number for a spin $n$ dimensional manifold $M$, and $KO^{-n}(pt)$ is known by the Bott periodicity theorem:
\[
\begin{array}{c|c|c|c|c|c|c|c|c}
\hline n\pmod 8 & {0} & {1} & {2} & {3} & {4} & {5} & {6} & {7} \\
\hline K O^{-n}(pt) & {\mathbb{Z}} & {\mathbb{Z} / 2} & {\mathbb{Z} / 2} & {0} & {\mathbb{Z}} & {0} & {0} & {0} \\
\hline
\end{array}
\]
For a complex $n$-dimensional complete intersection $X_n(\underline{d})$, $\alpha(X_n(\underline{d}))\in KO^{-2n}(pt)$. Then
$\alpha(X_n(\underline{d}))$ vanishes if $n= 3\pmod 4$.
Combine \Cref{Ahatgenus} and the results in \cite{Fang&Shao}, we can obtain the $\alpha$-invariant of  complete intersections $X_n(d_1,\dots,d_r)$  with $c_1=n+r+1-\sum\limits_{i=1}^rd_i$ even.

\begin{theorem}\label{AIformula}
Let $\underline{d}=(d_1,\dots,d_r)$. Then for a complete intersection $X_n(\underline{d})$ with $c_1=n+r+1-\sum\limits_{i=1}^rd_i$ even, the $\alpha$-invariant $\alpha(X_n(\underline{d}))$ is equal to
\begin{enumerate}\itemsep=3mm
\item $\hat{A}(X_n(\underline{d}))$, if $n= 0\pmod 4$, and is equal to $\dfrac{1}{2}\hat{A}(X_n(\underline{d}))$, if $n= 2\pmod 4$.
\item $\displaystyle{\sum_{0\leqslant j\leqslant r-1 \atop 1\leqslant k_1<\dots<k_j\leqslant r-1}\dbinom {\frac{1}{2}c_1-1+d_{k_1}+\dots+d_{k_j}}{n+r}}\pmod 2$, if $n= 1\pmod 4$, where \\
$d_{k_1}+\dots+d_{k_j}$ vanishes  if $j=0$.

\item $0$, if $n= 3\pmod 4$.
\end{enumerate}
\end{theorem}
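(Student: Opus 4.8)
The plan is to split along the four congruence classes of $n\pmod 4$ and invoke the known relations between $\alpha$ and $\hat A$ together with \Cref{Ahatgenus} and the results of Fang--Shao. For $n\equiv 3\pmod 4$ the complex dimension $n$ gives real dimension $2n\equiv 6\pmod 8$, so $KO^{-2n}(pt)=0$ and $\alpha(X_n(\underline d))=0$ trivially; this is already observed in the text. For $n\equiv 0\pmod 4$ we have $2n\equiv 0\pmod 8$, where $\alpha=\hat A$, and for $n\equiv 2\pmod 4$ we have $2n\equiv 4\pmod 8$, where $\alpha=\tfrac12\hat A$; in both cases \Cref{Ahatgenus} supplies the explicit polynomial formula for $\hat A(X_n(\underline d))$, and since $n$ is even the hypothesis that $c_1$ be even is exactly the spin condition needed for $\alpha$ to be defined. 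So items (1)--(3) in the stated congruences $0,2,3\pmod 4$ follow immediately.

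The substantive case is $n\equiv 1\pmod 4$, i.e.\ $2n\equiv 2\pmod 8$, where $\alpha(X_n(\underline d))\in KO^{-2n}(pt)=\zz/2$. Here $\alpha$ is \emph{not} $\hat A$ (which vanishes in odd complex dimension anyway), but is instead a mod-$2$ KO-characteristic number. The approach is to quote the computation of $\alpha$ for complete intersections from Fang--Shao \cite{Fang&Shao}, which expresses $\alpha(X_n(\underline d))$ for $n\equiv 1\pmod 4$ (with $c_1$ even) as a mod-$2$ residue-type expression, and then to perform the same contour-integral-to-binomial-coefficient manipulation as in the two proofs of \Cref{Ahatgenus}. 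Concretely, the expected starting point is a formula of the shape
\[
\alpha(X_n(\underline d))=\frac{1}{2\pi\sqrt{-1}}\oint_{\varGamma(0)}(1+z)^{\frac12 c_1-1}\cdot\frac{\prod_{i=1}^{r-1}\bigl((1+z)^{d_i}-1\bigr)}{z^{n+r}}\,dz\pmod 2,
\]
where one of the $r$ factors (say the $d_r$ one) has been absorbed or cancelled because of a dimension shift characteristic of the $\zz/2$-valued invariant in real dimension $2\pmod 8$; expanding $\prod_{i=1}^{r-1}((1+z)^{d_i}-1)=\sum_{j}(-1)^{r-1-j}\sum_{k_1<\dots<k_j\le r-1}(1+z)^{d_{k_1}+\dots+d_{k_j}}$ and reading off the coefficient of $z^{n+r-1}$ via the residue theorem yields exactly $\sum_{0\le j\le r-1,\ 1\le k_1<\dots<k_j\le r-1}\binom{\frac12 c_1-1+d_{k_1}+\dots+d_{k_j}}{n+r}\pmod 2$, since the signs $(-1)^{r-1-j}$ are irrelevant mod $2$.

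I would organize the write-up as: (i) dispose of $n\equiv 0,2,3\pmod 4$ in one paragraph using $\alpha=\hat A$, $\alpha=\tfrac12\hat A$, $\alpha=0$ and \Cref{Ahatgenus}; (ii) for $n\equiv 1\pmod 4$, recall the relevant statement from \cite{Fang&Shao} verbatim, noting that the spin hypothesis $c_1$ even is what makes the $\alpha$-invariant meaningful; (iii) carry out the residue computation exactly parallel to the first proof of \Cref{Ahatgenus}, working mod $2$ throughout so that all the alternating signs drop out. The main obstacle I anticipate is matching conventions with \cite{Fang&Shao}: their computation of the $\zz/2$ KO-characteristic number presumably proceeds through the ${\rm ABS}$-orientation or a spin-characteristic-class argument, and I will need to check carefully that the exponent $\tfrac12 c_1-1$ and the truncation to $r-1$ factors (rather than $r$) come out correctly — this dimension bookkeeping (the "$-1$'' shift and the disappearance of one $d_i$) is the only genuinely delicate point, since everything downstream is the same formal residue manipulation already used twice for \Cref{Ahatgenus}. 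A secondary check is that the mod-$2$ reduction is well-defined, i.e.\ independent of which index is singled out as $d_r$, which follows because the full symmetric expression over all $r$ indices reduces mod $2$ to the one over $r-1$ indices by an Euler-characteristic parity argument already implicit in the structure of \Cref{Ahatgenus}.
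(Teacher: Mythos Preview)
Your plan is essentially the paper's approach; parts (1) and (3) are handled identically, and for part (2) you are anticipating the paper's \emph{second} proof, which starts from the Fang--Shao integral formula $\alpha(X_n(\underline d))=\int_{X_{n+1}(d_1,\dots,d_{r-1})}\hat A\cdot e^{-d_rx/2}\pmod 2$, inserts the explicit $\hat A$-class \eqref{AhatclassCI}, and passes to a residue in $\omega=e^z-1$. Two small corrections: the denominator in your contour integral should be $z^{n+r+1}$ (so the residue is the coefficient of $z^{n+r}$, matching $\binom{\cdot}{n+r}$), and Fang--Shao does not hand you a residue formula directly---their input is either the integral above or the closed-form sum $\sum\binom{\frac12(n+r-1\pm d_1\pm\cdots\pm d_{r-1}+d_r)}{n+r}\pmod 2$, and the passage to your residue expression is exactly the computation you still need to do. The paper's \emph{first} proof avoids residues entirely and manipulates that closed-form sum algebraically; the key step there---and the correct justification for independence of the distinguished index $d_r$---is the concrete binomial identity $\binom{(n-1)/2+k}{n}\equiv\binom{(n-1)/2-k}{n}\pmod 2$, not an Euler-characteristic parity argument.
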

\begin{proof}
(1) follows directly from \cite[Proposition 4.2]{Hitchin1974}, and $\hat{A}(X_n(\underline{d}))$ is as in \Cref{Ahatgenus}.

(3) is trivial.

For (2), when $n= 1\pmod 4$, by \cite[(8)]{Fang&Shao}, the $\alpha$-invariant of $X_n(\underline{d})$ is as follows:
\begin{equation}\label{Fangshao}
\alpha(X_{n}(\underline{d}))=\sum\dbinom{\frac{1}{2}\left(n+r-1\pm d_{1}\pm\cdots\pm d_{r-1}+d_{r}\right)}{n+r} \pmod 2,
\end{equation}
where the summation sums over all the possibilities $\pm d_1\pm\cdots\pm d_{r-1}+d_r$.
By \cite[Remark 2]{Fang&Shao}, the choice of $d_r$ in \eqref{Fangshao} is not important to the result, since 
\[
\dbinom{\frac{n-1}{2}+k}{n}=\dbinom{\frac{n-1}{2}-k}{n}\pmod 2. 
\]
Therefore we can make \eqref{Fangshao} stepped forward:
\begin{align*}
\alpha(X_{n}(\underline{d}))= & \sum\dbinom{\frac{1}{2}\left(n+r-1\pm d_{1}\pm\cdots\pm d_{r-1}+d_{r}\right)}{n+r} \pmod 2 \\
= & \sum\dbinom{\frac{1}{2}\left(n+r+1\pm d_{1}\pm\cdots\pm d_{r-1}-d_{r}\right)-1}{n+r} \pmod 2\\
=& \sum\dbinom{\frac{1}{2}(n+r+1-\sum\limits_{i=1}^r{d_i}+d_1\pm d_{1}+d_2\pm d_2+\dots+d_{r-1}\pm d_{r-1})-1}{n+r} \\
=& \sum\dbinom{\frac{1}{2}c_1+\frac{1}{2}\left(d_1\pm d_{1}+d_2\pm d_2+\dots+d_{r-1}\pm d_{r-1}\right)-1}{n+r} \\
=& \sum_{\varepsilon_1,\dots,\varepsilon_{r-1}\in\{0,1\}}\dbinom{\frac{1}{2}c_1-1+\varepsilon_1d_1+\varepsilon_2d_2+\dots+\varepsilon_{r-1}d_{r-1}}{n+r} \\
=& \sum_{j=0}^{r-1}\sum_{j=\varepsilon_1+\dots+\varepsilon_{r-1}\atop\varepsilon_1,\dots,\varepsilon_{r-1}\in\{0,1\}}\dbinom{\frac{1}{2}c_1-1+\varepsilon_1d_1+\varepsilon_2d_2+\dots+\varepsilon_{r-1}d_{r-1}}{n+r} \\
=& \sum_{j=0}^{r-1}\sum_{1\leqslant k_1<\dots<k_j\leqslant r-1}\dbinom {\frac{1}{2}c_1-1+d_{k_1}+\dots+d_{k_j}}{n+r}.
\end{align*}
Thus, 
\begin{equation*}
\alpha(X_{n}(\underline{d}))=\sum_{0\leqslant j\leqslant r-1 \atop 1\leqslant k_1<\dots<k_j\leqslant r-1}\dbinom {\frac{1}{2}c_1-1+d_{k_1}+\dots+d_{k_j}}{n+r} \pmod 2.\qedhere
\end{equation*}
\end{proof}
In fact, the key formula \eqref{Fangshao} we used in the above proof is deduced from the following formula (\cite[(5)]{Fang&Shao}):
\begin{align*}
&~~~~\alpha(X_{n}(d_1,\dots,d_{r-1},d_r))
\\
&= \int_{X_{n+1}(d_1,\dots,d_{r-1})}\hat{A}((T(X_{n+1}(d_1,\dots,d_{r-1})))_\mathbb{R})\cdot\exp\left(\frac{d_r}{2}x\right) \pmod 2.
\end{align*}
Begin with this formula, we can directly prove \Cref{AIformula} (2).
\begin{proof}[2nd Proof of \Cref{AIformula} (2)]
\begin{align*}
& \alpha(X_{n}(d_1,\dots,d_{r-1},d_r))\\
=& \int_{X_{n+1}(d_1,\dots,d_{r-1})}\hat{A}((T(X_{n+1}(d_1,\dots,d_{r-1})))_\mathbb{R})\cdot\exp\left(\frac{d_r}{2}x\right) \pmod 2 \\ 
=& \int_{X_{n+1}(d_1,\dots,d_{r-1})}\hat{A}((T(X_{n+1}(d_1,\dots,d_{r-1})))_\mathbb{R})\cdot\exp\left(-\frac{d_r}{2}x\right) \pmod 2.
\end{align*}
By \eqref{AhatclassCI}, we have 
\begin{align*}
&\alpha(X_{n}(d_1,\dots,d_{r-1},d_r))\\
= & \int_{X_{n+1}(d_1,\dots,d_{r-1})}\frac{x^{n+2}}{\prod_{i=1}^{r-1}d_i}\cdot\frac{\prod_{i=1}^{r-1}\left(\exp{\left(\frac{d_i}{2}x\right)}-\exp{\left(-\frac{d_i}{2}x\right)}\right)}{\left(\exp{\left(\frac{1}{2}x\right)}-\exp{\left(-\frac{1}{2}x\right)}\right)^{n+r+1}}\cdot \exp{\left(-\frac{d_r}{2}x\right)}. 
\end{align*}
By residue theorem, 
\begin{align*}
&~~~~ \alpha(X_{n}(d_1,\dots,d_{r-1},d_r))\\
&= \frac{1}{2\pi\sqrt{-1}}\oint_{\varGamma(0)}\frac{\prod_{i=1}^{r-1}\left(\exp{\left(\frac{d_i}{2}z\right)}-\exp{\left(-\frac{d_i}{2}z\right)}\right)}{\left(\exp{\left(\frac{1}{2}z\right)}-\exp{\left(-\frac{1}{2}z\right)}\right)^{n+r+1}}\cdot \exp{\left(-\frac{d_r}{2}z\right)}d z \\
&= \frac{1}{2\pi\sqrt{-1}}\oint_{\varGamma(0)}\exp{\left(\frac{1}{2}(n+r+1)z\right)}\cdot\prod_{i=1}^{r}\exp{\left(-\frac{d_i}{2}z\right)}\cdot\frac{\prod_{i=1}^{r-1}(\exp{(d_iz)}-1)}{(\exp{(z)}-1)^{n+r+1}}d z \\
&= \frac{1}{2\pi\sqrt{-1}}\oint_{\varGamma(0)}\exp{\left(\frac{1}{2}(n+r+1-\sum_{i=1}^rd_i)z\right)}\cdot \frac{\prod_{i=1}^{r-1}(\exp{(d_iz)}-1)}{(\exp{(z)}-1)^{n+r+1}}d z \\
&= \frac{1}{2\pi\sqrt{-1}}\oint_{\varGamma(0)}\exp{\left(\frac{1}{2}c_1z\right)}\cdot \frac{\prod_{i=1}^{r-1}(\exp{(d_iz)}-1)}{(\exp{(z)}-1)^{n+r+1}}\cdot\exp(-z)d(\exp(z)-1).
\end{align*}
By means of variable substitution: $\omega=\exp(z)-1$,
\begin{align*}
&~~~~ \alpha(X_{n}(d_1,\dots,d_{r-1},d_r))\\
&= \frac{1}{2\pi\sqrt{-1}}\oint_{\varGamma(0)}(1+\omega)^{\frac{1}{2}c_1}\cdot\frac{\prod_{i=1}^{r-1}((1+\omega)^{d_i}-1)}{\omega^{n+r+1}}\cdot(1+\omega)^{-1}d \omega \\
&= \frac{1}{2\pi\sqrt{-1}}\oint_{\varGamma(0)}(1+\omega)^{\frac{1}{2}c_1-1}\cdot\frac{\prod_{i=1}^{r-1}((1+\omega)^{d_i}-1)}{\omega^{n+r+1}}d \omega \\
&= \sum_{0\leqslant j\leqslant r-1 \atop 1\leqslant k_1<\dots<k_j\leqslant r-1}\dbinom {\frac{1}{2}c_1-1+d_{k_1}+\dots+d_{k_j}}{n+r} \pmod 2. \qedhere
\end{align*}
\end{proof}

\section{The virtual Hirzebruch genus of complete intersections}\label{virtualgenus}
 
 Inspired by the virtual index and virtual generalized Todd genus in \cite[\S 9, \S 11]{Hi1978}, we define the virtual Hirzebruch genus. First of all, let's introduce the concept of virtual submanifold. The following part about the virtual submanifold is in  \cite[\S 3.1]{HiBeJu1992}
 
 According to Thom \cite{Thom1954}, every 2-dimensional integral cohomology class $v\in H^2(M^n;\mathbb{Z})$ of a compact oriented differentiable manifold $M^n$ can be represented by a submanifold $V^{n-2}$, i.e., the cohomology class $v$ is the Poincar\'e duality of  the fundamental class of the oriented submanifold $V^{n-2}$. Therefore $v\in H^2(M;\zz)$ is called a {\bf virtual submanifold}. Assume that $v_1, v_2, \dots, v_r\in H^2(M;\zz)$, and $v_1$ is a virtual submanifold represented by a submanifold $V^{n-2}$ of the manifold $M$, that the restriction of $v_2$ to $V^{n-2}$ represents a submanifold $V^{n-4}$ of $V^{n-2}$, $\dots$, and finally that the restriction of $v_r$ to $V^{n-2(r-1)}$ represents a submanifold $V^{n-2r}$ of $V^{n-2(r-1)}$. The manifold $V^{n-2r}$ is then a submanifold of $M$ of codimension $2r$, which represents the virtual submanifold $(v_{1}, \ldots, v_{r})$. An alternative construction is the following: If $v_{1}, \dots, v_{r}$ can be represented by codimension $2$ submanifolds $V_{1}^{\prime}, \ldots, V_{r}^{\prime}$ of $M$, which are transversal to one another, then their transversal intersection represents the virtual submanifold $(v_{1}, \dots, v_{r})$.

Let's set  up the following convention:
\begin{enumerate}
\item If $M$ is a compact almost complex manifold,  $Q(x)=x/R(x)$ is taken as any power series with constant term being one.
\item If $M$ is a compact  oriented differentiable manifold,  $Q(x)=x/R(x)$ is taken as an even power series with constant term being one.
\end{enumerate}

\begin{definition}\label{VK}
For the integral cohomology classes $v_{1}, v_{2}, \ldots, v_{r}$ of $H^{2}(M;\mathbb{Z})$, the {\bf virtual Hirzebruch genus} $\varphi^{}_Q$ of the virtual submanifold $(v_{1}, v_{2}, \ldots, v_{r})$ is defined as follows:
\begin{align*}
\varphi^{}_Q(v_1,\dots,v_r)_M& =\int_{M}{\prod_{j=1}^rR(v_j)\cdot\varphi^{}_Q(TM)}, 
\end{align*}
where $\varphi^{}_Q(TM)$ is the $\varphi^{}_Q$-class defined as in \Cref{sec-genus}.
\end{definition}
In fact, the virtual Hirzebruch genus of the virtual submanifold $(v_{1}, v_{2}, \ldots, v_{r})$ is related to the Hirzebruch genus of the submanifold $V^{n-2r}$ of $M^n$. Similar to the argument in \cite[\S 9, \S 11]{Hi1978}, we have the following result.
\begin{proposition}\label{VandRK}
As the notations in \Cref{VK}. Assume that the virtual submanifold $(v_{1}, \dots, v_{r})$ is represented by a submanifold $V^{n-2r}$ of the manifold $M^n$, then
\begin{equation*}
\varphi^{}_Q(v_1,\dots,v_r)_M=\varphi^{}_Q(V^{n-2r}).
\end{equation*}
\end{proposition}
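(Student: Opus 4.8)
The plan is to reduce the statement to an iterated application of a single codimension-two case and then invoke a standard normal-bundle computation. First I would set up the inductive framework: by the definition of the virtual submanifold $(v_1,\dots,v_r)$, we have a chain $M^n\supset V^{n-2}\supset V^{n-4}\supset\cdots\supset V^{n-2r}$, where $V^{n-2}$ represents $v_1$, the restriction $v_2|_{V^{n-2}}$ represents $V^{n-4}$ inside $V^{n-2}$, and so on. So it suffices to prove the case $r=1$ — namely, if $v\in H^2(M;\zz)$ is represented by a submanifold $V^{n-2}$, then $\varphi^{}_Q(v)_M=\int_M R(v)\cdot\varphi^{}_Q(TM)$ equals $\varphi^{}_Q(V^{n-2})$ — and then apply it $r$ times, each time passing from the ambient manifold to the hypersurface $V$ inside it and using that $v_{i+1}|_V,\dots,v_r|_V$ are the defining classes for the remaining virtual submanifold inside $V$. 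One must check that the restrictions $R(v_j)|_V$ behave correctly under this iteration, but that is automatic since restriction of cohomology classes is a ring homomorphism and commutes with the power series $R$.

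The heart of the matter is therefore the codimension-two case, and the key geometric input is the description of the normal bundle. Since $v=[V]^{\mathrm{PD}}$ and $V$ is cut out (up to homotopy) as the zero locus of a section of a complex line bundle $L$ with $c_1(L)=v$, the normal bundle $\nu$ of $V$ in $M$ is $L|_V$, so $c_1(\nu)=v|_V=:\bar v$. From the exact sequence $0\to TV\to (TM)|_V\to \nu\to 0$ one gets the multiplicativity relation $\varphi^{}_Q((TM)|_V)=\varphi^{}_Q(TV)\cdot\varphi^{}_Q(\nu)=\varphi^{}_Q(TV)\cdot Q(\bar v)$, where I use that $\varphi^{}_Q$ of a line bundle with first Chern class $\bar v$ is $Q(\bar v)$. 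Here, in the oriented (real) case one argues identically with Pontrjagin classes, using that $Q$ is even and the Pontrjagin class of the real $2$-plane bundle underlying $L$ is $1+v^2$, so again the relevant factor is $Q(\bar v)$ with the conventions of \Cref{sec-genus}; this is where the two bullet-pointed conventions on $Q$ are used.

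Next I would push the computation from $V$ back to $M$. The Gysin/Poincaré-duality identity states that for any $\omega\in H^*(M)$, $\int_V \omega|_V = \int_M \omega\cdot v$ (integration over the submanifold Poincaré-dual to $v$). Applying this with $\omega$ chosen so that $\omega|_V = \varphi^{}_Q(TV)$ — concretely, take $\omega=\varphi^{}_Q(TM)\cdot Q(v)^{-1}\cdot v^{?}$, or more cleanly rewrite directly — gives
\begin{align*}
\varphi^{}_Q(V^{n-2})&=\int_V \varphi^{}_Q(TV)=\int_V \frac{\varphi^{}_Q((TM)|_V)}{Q(\bar v)}=\int_V \varphi^{}_Q((TM)|_V)\cdot\frac{R(\bar v)}{\bar v}\\
&=\int_M \varphi^{}_Q(TM)\cdot\frac{R(v)}{v}\cdot v=\int_M R(v)\cdot\varphi^{}_Q(TM)=\varphi^{}_Q(v)_M,
\end{align*}
using $Q(x)=x/R(x)$ so that $1/Q(\bar v)=R(\bar v)/\bar v$, and absorbing the extra factor $v$ from the Gysin map into the $1/\bar v$. (The factor-of-$v$ bookkeeping is exactly the step that makes $R$ rather than $Q$ appear in \Cref{VK}, which is the point of writing the definition that way.) For the iteration I then repeat this, each stage contributing one factor $R(v_j)$ restricted appropriately, until I land on $V^{n-2r}$.

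The main obstacle I anticipate is purely bookkeeping rather than conceptual: keeping track of the Gysin-map degree shift (the extra $\cup\, v$) against the $1/v$ coming from $Q(v)=v/R(v)$, and making sure that in the iterated setting the classes $v_{i+1},\dots,v_r$ restricted to the successive submanifolds really are the defining classes of the residual virtual submanifold — this is where one must lean on the transversal-intersection description in \cite[\S 3.1]{HiBeJu1992} quoted above. The line-bundle representability of $v$ (needed to identify the normal bundle) is standard via the classifying map to $\cp^\infty=K(\zz,2)$ and Thom's theorem, already cited in the excerpt; in the oriented non-complex case one uses the oriented $2$-plane bundle with Euler class $v$ instead, with no change to the computation since $Q$ is even. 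I would present the $r=1$ case in full and then state the induction step compactly.
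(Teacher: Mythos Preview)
Your proposal is correct and follows essentially the same approach as the paper's proof: both reduce to the codimension-two case $r=1$ and then induct, and in that case both use the tangent--normal splitting $TM|_V\cong TV\oplus\nu$ with $c_1(\nu)=v|_V$ to obtain $\varphi^{}_Q(TV)=j^*\big(\tfrac{R(v)}{v}\,\varphi^{}_Q(TM)\big)$, followed by the Poincar\'e-duality integration identity $\int_V j^*u=\int_M v\cdot u$ to conclude. The only cosmetic difference is that the paper writes out the oriented (Pontrjagin-class) case explicitly and remarks that the almost complex case is analogous, whereas you phrase the normal-bundle identification via the line bundle $L$ with $c_1(L)=v$; the underlying computation is identical.
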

\begin{proof}
We only give the proof when $M$ is a compact oriented differentiable $n$-manifold. This proof is adapted to the case of  almost complex manifold (see \cite[\S 11]{Hi1978}). 

 Let $v\in H^2(M^n;\mathbb{Z})$ be a virtual manifold represented by a submanifold $V^{n-2}$, and $j: V^{n-2}\rightarrow M^n$ be the embedding of the oriented submanifold $V^{n-2}$ of $M^n$. Let $TV^{n-2}$ and $TM^n$ be tangle bundles of $V^{n-2}$, $M^n$ respectively and $\mathcal{N}$ is the normal bundle of $V^{n-2}$ in $M^n$, then
\[
j^*(TM^n)=TV^{n-2}\oplus\mathcal{N}.
\]
Let $p(V^{n-2})$ and $p(M^n)$ be the total Pontrjagin classes of $V^{n-2}$ and $M^n$ respectively. Then
\[
j^*p(M^n)=p(V^{n-2})p(\mathcal{N}) \text{~modulo~torsion}.
\]
Since $p(\mathcal{N})=j^*(1+v^2)$, then
\[
p(V^{n-2})=j^*((1+v^2)^{-1}p(M^n)).
\]
Note that $(1+v^2)^{-1}$ is unique in the cohomology class of $M^n$. Thus, for every Hirzebruch genus $\varphi^{}_Q$ associated to an even power series $Q(x)$, we have
\[
\varphi^{}_Q(TV^{n-2})=j^*\left(\frac{R(v)}{v}\varphi^{}_Q(TM^n)\right).
\]
For a cohomology class $u\in H^{n-2}(M^n;\mathbb{Z})$, by Poincar\'e duality, we have
\[
\int_{V^{n-2}}j^*(u)=\int_{M^n}vu.
\]
Therefore,
\begin{align*}
\varphi^{}_Q(V^{n-2}) & =\int_{V^{n-2}}\varphi^{}_Q(TV^{n-2})= \int_{V^{n-2}}j^*\left(\frac{R(v)}{v}\varphi^{}_Q(TM^n)\right)\\
& =\int_{M^n}R(v)\varphi^{}_Q(TM^n)=\varphi^{}_Q(v)_M.
\end{align*}
By finite induction, it implies that $\varphi^{}_Q(v_1,\dots,v_r)_M=\varphi^{}_Q(V^{n-2r})$.
\end{proof}

Now we consider the complete intersection $X_{n}(d_1,\dots,d_r)$ embedded in $\cp^{n+r}$. Let $x\in H^2(\cp^{n+r};\zz)$ be the first Chern class of the Hopf line bundle $H$ over $\cp^{n+r}$, then the virtual submanifold $(d_1x,\dots, d_rx)$ is represented by the complete intersection $X_{n}(d_1,\dots, d_r)$.
So by \Cref{VandRK}, we have 
\begin{corollary}\label{VRKCI}
The $\varphi^{}_Q$-genus of $X_n(d_1,\dots,d_r)$ is the virtual $\varphi^{}_Q$-genus of the virtual submanifold $(d_1x,\dots, d_rx)$, i.e., 
\begin{equation*}
\varphi^{}_Q(d_1x,\dots,d_rx)_{\mathbb{C}P^{n+r}}=\varphi^{}_Q(X_{n}(d_1,\dots,d_r)). 
\end{equation*}
\end{corollary}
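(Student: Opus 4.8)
The plan is to combine Proposition \ref{VandRK} with the explicit description of the normal data of the complete intersection inside $\mathbb{C}P^{n+r}$. First I would recall the geometric setup: $X_n(d_1,\dots,d_r)$ is by definition the transversal intersection of $r$ nonsingular hypersurfaces $V_1',\dots,V_r'$ in $\mathbb{C}P^{n+r}$, where $V_i'$ has degree $d_i$. A degree-$d_i$ hypersurface is the zero locus of a generic section of $H^{\otimes d_i}$, so its Poincar\'e dual is $c_1(H^{\otimes d_i}) = d_i x$, where $x = c_1(H) \in H^2(\mathbb{C}P^{n+r};\mathbb{Z})$. Hence each $d_i x$ is a virtual submanifold represented by $V_i'$, and the transversality of the $V_i'$ is exactly the "alternative construction" of a virtual submanifold $(d_1 x,\dots,d_r x)$ described in the text just before Definition \ref{VK}: their common transversal intersection $X_n(d_1,\dots,d_r)$ represents $(d_1 x,\dots,d_r x)$, a submanifold of codimension $2r$.

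With that identification in hand, I would simply invoke Proposition \ref{VandRK} with $M = \mathbb{C}P^{n+r}$, $v_i = d_i x$, and $V^{n-2r}$ (in real dimension, the ambient manifold is $2(n+r)$-dimensional and the complete intersection has real dimension $2n$) equal to $X_n(d_1,\dots,d_r)$. The proposition gives directly
\[
\varphi^{}_Q(d_1 x,\dots,d_r x)_{\mathbb{C}P^{n+r}} = \varphi^{}_Q(X_n(d_1,\dots,d_r)),
\]
which is the claim. One small point to address for the oriented (even $Q$) case is that Proposition \ref{VandRK} was proved using the normal bundle relation $p(\mathcal N) = j^*(1+v^2)$; here one should note that each hypersurface $V_i'$ has normal bundle $H^{\otimes d_i}|_{V_i'}$, whose first Chern class restricts to $d_i x$, so the iterated normal bundle of $X_n(\underline d)$ in $\mathbb{C}P^{n+r}$ is $\bigoplus_i H^{\otimes d_i}$ restricted, consistent with the inductive hypothesis of Proposition \ref{VandRK}; in the almost complex case one uses the analogous Chern-class version. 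In either case the formula \eqref{totalChern}–\eqref{totalPont} for the characteristic classes of $X_n(\underline d)$ is recovered as a consistency check, since $(1+x)^{n+r+1}$ is $c(T\mathbb{C}P^{n+r})$ restricted and $\prod_i(1+d_i x)^{-1}$ is the inverse of the total Chern class of the normal bundle.

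There is essentially no obstacle here: the corollary is a direct specialization of Proposition \ref{VandRK} once one observes that a degree-$d_i$ hypersurface is Poincar\'e dual to $d_i x$ and that the complete intersection is, by definition, the transversal intersection of such hypersurfaces. The only thing requiring a word of care is bookkeeping the conventions — that $Q(x) = x/R(x)$ is taken even when $M$ is merely oriented and arbitrary when $M$ is almost complex (here $\mathbb{C}P^{n+r}$ is both, so either convention applies), and that the complete intersection inherits a compatible orientation (respectively almost complex structure) from $\mathbb{C}P^{n+r}$ as the transversal zero locus. I would state the proof in two or three sentences, citing Proposition \ref{VandRK} and the paragraph preceding Definition \ref{VK}.
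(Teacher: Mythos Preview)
Your proposal is correct and matches the paper's approach exactly: the paper simply observes that the virtual submanifold $(d_1x,\dots,d_rx)$ is represented by $X_n(d_1,\dots,d_r)$ and then invokes Proposition~\ref{VandRK}. Your additional remarks about the normal bundle and orientation conventions are more detailed than the paper, but the argument is the same direct specialization.
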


As a special case of \Cref{VK}, we can also define the virtual $\hat{A}$-genus.
\begin{definition}\label{VAhat}
Let $M$ be a compact oriented differentiable manifold. For $v_1,\dots,v_r\in H^2(M;\mathbb{Z})$, the {\bf virtual  $\hat{A}$-genus} of $(v_1,\dots, v_r)$ is defined as follows:
\begin{equation*}
\hat{A}(v_1,\dots,v_r)_M=\int_{M}{\prod_{j=1}^rR(v_j)\cdot\hat{A}(TM)},
\end{equation*}
where $Q(x)=\dfrac{x}{R(x)}=\dfrac{\frac{1}{2}x}{\sinh\left(\frac{1}{2}x\right)}$.
\end{definition}
\begin{proposition}\label{prop-VAhatadd}
Let $M$ be a compact oriented differentiable manifold. For any $v_1,\dots,v_r,w\in H^2(M;\zz)$, the virtual $\hat{A}$-genus satisfies the following equality:
\begin{align*}
&~~~~ \hat{A}(v_1,\dots,v_{r-2},v_{r-1}+w,v_r+w)_M \\
&= \hat{A}(v_1,\dots,v_{r-2},v_{r-1},v_r)_M+\hat{A}(v_1,\dots,v_{r-2},v_{r-1}+v_r+w,w)_M.
\end{align*}
\end{proposition}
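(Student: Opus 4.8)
The plan is to reduce everything to an algebraic identity about the power series $R(x)=x/Q(x)=2\sinh(x/2)$, exactly as in the proof of \Cref{VandRK}: the virtual $\hat{A}$-genus is by definition an integral over $M$ of a universal polynomial in the classes $v_1,\dots,v_r,w$ (multiplied by the fixed class $\hat{A}(TM)$), so it suffices to verify the claimed relation at the level of formal power series in commuting variables $v_1,\dots,v_{r-2},v_{r-1},v_r,w$, and then cap with $\hat{A}(TM)$ and evaluate on $[M]$. Concretely, the three terms appearing in the statement differ only in the factors coming from $v_{r-1}$, $v_r$, and $w$, so after cancelling the common factor $\prod_{j=1}^{r-2}R(v_j)\cdot\hat{A}(TM)$ the claim becomes the identity
\[
R(v_{r-1}+w)\,R(v_r+w)=R(v_{r-1})\,R(v_r)+R(v_{r-1}+v_r+w)\,R(w).
\]

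The main step is therefore to prove this last identity with $R(x)=2\sinh(x/2)=e^{x/2}-e^{-x/2}$. This is a routine computation: writing $a=v_{r-1}$, $b=v_r$, $c=w$ and expanding each factor as a difference of exponentials, one gets for the left side
\[
(e^{(a+c)/2}-e^{-(a+c)/2})(e^{(b+c)/2}-e^{-(b+c)/2})
= e^{(a+b)/2+c}-e^{(a-b)/2}-e^{(b-a)/2}+e^{-(a+b)/2-c},
\]
while the two summands on the right expand to $e^{(a+b)/2}-e^{(a-b)/2}-e^{(b-a)/2}+e^{-(a+b)/2}$ and $e^{(a+b)/2+c}-e^{(a+b)/2}-e^{-(a+b)/2}+e^{-(a+b)/2-c}$ respectively; adding these two, the $\pm e^{(a+b)/2}$ and $\pm e^{-(a+b)/2}$ terms cancel in pairs and one recovers exactly the left-hand side. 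Hence the power-series identity holds, and multiplying through by the common factor and integrating over $M$ gives the proposition.

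I do not expect any genuine obstacle here; the only point that needs a word of care is making explicit that Definition~\ref{VAhat} really does expand as $\int_M(\prod_{j}R(v_j))\cdot\hat{A}(TM)$ with $R(x)=e^{x/2}-e^{-x/2}$, so that the three virtual genera on both sides share the common factor $\prod_{j=1}^{r-2}R(v_j)\cdot\hat{A}(TM)$ and the whole statement collapses to the scalar identity above evaluated on $[M]$. One should also note that since $R$ is an odd power series the expression $\prod_j R(v_j)\cdot\hat{A}(TM)$ lies in $H^*(M;\qq)$ and only its top-degree component contributes, but this causes no difficulty because the claimed identity holds as an identity of formal power series, hence degree by degree.

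Alternatively, one can phrase the argument geometrically via \Cref{VandRK}: both sides compute $\hat{A}$-genera of submanifolds of $M$, and the identity $R(v_{r-1}+w)R(v_r+w)=R(v_{r-1})R(v_r)+R(v_{r-1}+v_r+w)R(w)$ is precisely what makes the two geometric decompositions agree. But the cleanest route is the direct power-series manipulation, which avoids any transversality bookkeeping and will be used in \Cref{mainAhatIM} by specializing $M=\cp^{N}$, $v_i=d_ix$, $w=-x$.
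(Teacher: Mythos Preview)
Your proposal is correct and follows essentially the same approach as the paper: both reduce the statement, via Definition~\ref{VAhat}, to the identity $R(u+w)R(v+w)=R(u)R(v)+R(u+v+w)R(w)$ for $R(x)=2\sinh(x/2)$, and then integrate against the common factor $\prod_{j=1}^{r-2}R(v_j)\cdot\hat{A}(TM)$. The paper simply asserts this hyperbolic sine identity, whereas you verify it explicitly by expanding $R(x)=e^{x/2}-e^{-x/2}$; otherwise the arguments are identical.
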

\begin{proof}
The $\hat{A}$-genus is corresponding to the power series $Q(x)=\dfrac{x}{R(x)}=\dfrac{\frac{1}{2}x}{\sinh\left(\frac{1}{2}x\right)}$, and
$R(x)=2\sinh\left(\frac{1}{2}x\right)$ satisfies the following identity
\begin{equation*}
R(u+w)R(v+w)=R(u)R(v)+R(u+v+w)R(w).
\end{equation*}
By \Cref{VAhat}, for any $w\in H^2(M;\zz)$, it induces the following equality
\begin{align*}
&~~~~ \hat{A}(v_1,\dots,v_{r-2},v_{r-1}+w,v_r+w)_M \\
&= \int_{M}{\prod_{j=1}^{r-2}R(v_j)\cdot R(v_{r-1}+w)\cdot R(v_r+w)\hat{A}(TM)}\\
&= \int_{M}{\prod_{j=1}^{r}R(v_j)\cdot \hat{A}(TM)}+\int_{M}{\prod_{j=1}^{r-2}R(v_j)\cdot R(v_{r-1}+v_r+w)\cdot R(w)\hat{A}(TM)}\\
&= \hat{A}(v_1,\dots,v_{r-2},v_{r-1},v_r)_M+\hat{A}(v_1,\dots,v_{r-2},v_{r-1}+v_r+w,w)_M.\qedhere
\end{align*}
\end{proof}
By \Cref{VRKCI} and \Cref{prop-VAhatadd},  we have
\begin{corollary}\label{cor-AhatCI}
For any positive integers $d_1,\dots,d_r, e$, we have the following equality on the $\hat{A}$-genus of complete intersections
\begin{align*}
&~~~~\hat{A}(X_{2n}(d_1,\dots,d_{r-2},d_{r-1}+e,d_r+e))\\
&= \hat{A}(X_{2n}(d_1,\dots,d_{r-1},d_r))+\hat{A}(X_{2n}(d_1,\dots,d_{r-2},d_{r-1}+d_r+e,e)).
\end{align*}
\end{corollary}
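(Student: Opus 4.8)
The plan is to deduce \Cref{cor-AhatCI} directly from \Cref{prop-VAhatadd} together with \Cref{VRKCI}, which identifies the $\hat{A}$-genus of a complete intersection with the virtual $\hat{A}$-genus of the corresponding virtual submanifold of $\mathbb{C}P^{n+r}$. The point is simply to take $M=\mathbb{C}P^{N}$ for a suitably large $N$ and to specialize the cohomology classes $v_1,\dots,v_r,w$ in \Cref{prop-VAhatadd} to integer multiples of the generator $x\in H^2(\mathbb{C}P^{N};\mathbb{Z})$.

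First I would fix the ambient space. Since the complete intersection $X_{2n}(d_1,\dots,d_{r-2},d_{r-1}+e,d_r+e)$ lives in $\mathbb{C}P^{2n+r}$, I take $M=\mathbb{C}P^{2n+r}$, let $x\in H^2(\mathbb{C}P^{2n+r};\mathbb{Z})$ be the first Chern class of the Hopf bundle, and set $v_i=d_i x$ for $i=1,\dots,r$ and $w=e x$. Applying \Cref{prop-VAhatadd} with these substitutions gives
\begin{align*}
&\hat{A}(d_1x,\dots,d_{r-2}x,(d_{r-1}+e)x,(d_r+e)x)_{\mathbb{C}P^{2n+r}}\\
&= \hat{A}(d_1x,\dots,d_{r-2}x,d_{r-1}x,d_rx)_{\mathbb{C}P^{2n+r}}+\hat{A}(d_1x,\dots,d_{r-2}x,(d_{r-1}+d_r+e)x,ex)_{\mathbb{C}P^{2n+r}}.
\end{align*}
Then, by \Cref{VRKCI}, each virtual $\hat{A}$-genus on the right and left is the actual $\hat{A}$-genus of the complete intersection it represents: the virtual submanifold $(d_1x,\dots,d_{r-2}x,(d_{r-1}+e)x,(d_r+e)x)$ is represented by $X_{2n}(d_1,\dots,d_{r-2},d_{r-1}+e,d_r+e)$, and likewise for the two terms on the right. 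Substituting these identifications yields exactly the asserted equality.

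One small bookkeeping point I would address is the ambient dimension: on the right-hand side the virtual submanifolds still have codimension $2r$ in $\mathbb{C}P^{2n+r}$, so they are represented by complete intersections of the same complex dimension $2n$ with the stated multi-degrees, and \Cref{VRKCI} applies verbatim. I should also remark that $\hat{A}(TM)$ in \Cref{VAhat} means $\hat{A}((TM)_{\mathbb{R}})$ when $M$ is almost complex, which is the convention already in force. There is essentially no obstacle here — the corollary is a formal consequence of the multiplicative identity $R(u+w)R(v+w)=R(u)R(v)+R(u+v+w)R(w)$ for $R(x)=2\sinh(\tfrac12 x)$ combined with the dictionary between virtual genera and genera of complete intersections; the only thing to be careful about is that the identity for $R$ (which underlies \Cref{prop-VAhatadd}) indeed holds, and that is the routine hyperbolic-function computation already invoked in the proof of \Cref{prop-VAhatadd}.
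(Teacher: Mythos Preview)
Your proposal is correct and follows essentially the same approach as the paper: the corollary is stated as an immediate consequence of \Cref{VRKCI} and \Cref{prop-VAhatadd}, which is exactly the specialization $M=\mathbb{C}P^{2n+r}$, $v_i=d_ix$, $w=ex$ that you carry out. Your additional remarks on ambient dimension and the convention for $\hat{A}(TM)$ are accurate but not needed for the argument.
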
		
\begin{theorem}
For any positive integers $d_1,\dots,d_r$, assume that $d_{r-1}\geqslant d_r\geqslant 2$, then we have
\begin{equation}\label{AhatIM}
\hat{A}(X_{2n}(d_1,\dots,d_{r-1},d_r))=\sum_{k=0}^{d_r-1}\hat{A}(X_{2n}(d_1,\dots,d_{r-2},d_{r-1}+d_r-1-2k)).
\end{equation}
Note that, if $d_r=1$, 
$\hat{A}(X_{2n}(d_1,\dots,d_{r-1},1))=\hat{A}(X_{2n}(d_1,\dots,d_{r-1}))$.
\end{theorem}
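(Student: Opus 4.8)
The plan is to prove \eqref{AhatIM} by induction on $d_r\geqslant 1$, using \Cref{cor-AhatCI} as the main engine and the elementary reduction $\hat{A}(X_{2n}(d_1,\dots,d_{r-2},e,1))=\hat{A}(X_{2n}(d_1,\dots,d_{r-2},e))$ (a degree-one hypersurface section leaves the diffeomorphism type unchanged) as the bridge from codimension $r$ back to codimension $r-1$. Since $d_1,\dots,d_{r-2}$ play no role in the manipulations, I will keep them fixed throughout and suppress them from the notation, writing $\hat{A}(X_{2n}(\dots,a,b))$ and $\hat{A}(X_{2n}(\dots,a))$.

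For the base case $d_r=1$ the right-hand side of \eqref{AhatIM} is the single term $k=0$, namely $\hat{A}(X_{2n}(\dots,d_{r-1}+1-1))=\hat{A}(X_{2n}(\dots,d_{r-1}))$, and by the degree-one reduction this equals the left-hand side $\hat{A}(X_{2n}(\dots,d_{r-1},1))$; this is exactly the remark appended to the statement. For the inductive step, suppose $d_{r-1}\geqslant d_r\geqslant 2$ and that \eqref{AhatIM} (read together with the $d_r=1$ remark) already holds whenever the last degree equals $d_r-1$. I would apply \Cref{cor-AhatCI} with $e=1$ and with $(d_{r-1}-1,\,d_r-1)$ in the last two slots — legitimate since $d_{r-1}-1\geqslant d_r-1\geqslant 1$ — to obtain
\[
\hat{A}(X_{2n}(\dots,d_{r-1},d_r))=\hat{A}(X_{2n}(\dots,d_{r-1}-1,d_r-1))+\hat{A}(X_{2n}(\dots,d_{r-1}+d_r-1,1)).
\]
The second summand equals $\hat{A}(X_{2n}(\dots,d_{r-1}+d_r-1))$ by the degree-one reduction, which is the $k=0$ term of the desired sum. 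To the first summand I apply the inductive hypothesis (valid because $d_{r-1}-1\geqslant d_r-1$), which gives $\sum_{k=0}^{d_r-2}\hat{A}(X_{2n}(\dots,(d_{r-1}-1)+(d_r-1)-1-2k))=\sum_{k=0}^{d_r-2}\hat{A}(X_{2n}(\dots,d_{r-1}+d_r-3-2k))$; reindexing $k\mapsto k-1$ rewrites this as $\sum_{k=1}^{d_r-1}\hat{A}(X_{2n}(\dots,d_{r-1}+d_r-1-2k))$. Adding the $k=0$ term from before yields precisely $\sum_{k=0}^{d_r-1}\hat{A}(X_{2n}(\dots,d_{r-1}+d_r-1-2k))$, which is \eqref{AhatIM}.

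Two small points must be kept in mind. First, one has to check that every complete intersection invoked has positive degrees: this follows from $d_{r-1}\geqslant d_r$, since the smallest argument appearing, $d_{r-1}+d_r-1-2(d_r-1)=d_{r-1}-d_r+1$, is at least $1$, and likewise the intermediate degrees $d_{r-1}-1,d_r-1$ are at least $1$. Second, one must not forget the degenerate value $d_r=1$ (equivalently, a degree-one factor), which is handled by the base case and the degree-one reduction. Neither of these is a genuine obstacle; the only real work — the functional equation $R(u+w)R(v+w)=R(u)R(v)+R(u+v+w)R(w)$ for $R(x)=2\sinh\left(\frac{1}{2}x\right)$ — has already been absorbed into \Cref{prop-VAhatadd} and \Cref{cor-AhatCI}, so the remaining difficulty is purely the index bookkeeping in the reindexing step. (One could alternatively deduce \eqref{AhatIM} as a binomial-coefficient identity directly from the closed formula of \Cref{Ahatgenus}, but the inductive argument above is considerably shorter.)
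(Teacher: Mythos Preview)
Your proof is correct and follows essentially the same approach as the paper: both apply \Cref{cor-AhatCI} with $e=1$ and the pair $(d_{r-1}-1,d_r-1)$ in the last two slots to peel off the $k=0$ term $\hat{A}(X_{2n}(\dots,d_{r-1}+d_r-1))$, and then iterate/induct on the remaining term $\hat{A}(X_{2n}(\dots,d_{r-1}-1,d_r-1))$. The paper simply writes ``let's iterate the above process'' where you spell out the induction and the positivity checks, but the underlying argument is the same.
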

\begin{proof}
Applying \Cref{cor-AhatCI}, then 
\begin{align*}
&~~~~ \hat{A}(X_{2n}(d_1,\dots,d_{r-2},d_{r-1},d_r))\\
&= \hat{A}(X_{2n}(d_1,\dots,d_{r-2},d_{r-1}-1,d_r-1))+\hat{A}(X_{2n}(d_1,\dots,d_{r-2},d_{r-1}+d_r-1,1))\\
&= \hat{A}(X_{2n}(d_1,\dots,d_{r-2},d_{r-1}-1,d_r-1))+\hat{A}(X_{2n}(d_1,\dots,d_{r-2},d_{r-1}+d_r-1)).
\end{align*}
Let's iterate the above process, the proof is finished.
\end{proof}
		
Consider complex even dimensional spin complete intersection $X_{2n}(\underline{d})$ with multi-degree $\underline{d}=(d_1,\dots,d_r)$. Note that $X_{2n}(\underline{d})$ is spin if and only if $c_1=2n+r+1-\sum\limits_{i=1}^r{d_i}= 0 \pmod 2$.
In \cite[Theorem 2]{Br1983}, Brooks can determine when $\hat{A}(X_{2n}(\underline{d}))=0$ for the spin complete intersections $X_{2n}(\underline{d})$. Now, we give a new proof for the necessary and sufficient conditions related to the vanishing of $\hat{A}(X_{2n}(\underline{d}))$.

\begin{theorem}\label{NSCAhatV}
For each complex even dimensional spin complete intersection $X_{2n}(\underline{d})$ with $\underline{d}=(d_1,\dots,d_r)$, the $\hat{A}$-genus of $X_{2n}(\underline{d})$ has the following properties:
\begin{enumerate}
\item $\hat{A}(X_{2n}(\underline{d}))=0$, if $c_1>0$;
\item $\hat{A}(X_{2n}(\underline{d}))>0$, if $c_1\leqslant0$,
\end{enumerate}
where $c_1=2n+r+1-\sum\limits_{i=1}^rd_i$.
\end{theorem}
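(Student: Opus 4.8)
The plan is to reduce the general spin complete intersection to the case $r=1$ by repeatedly applying the iterated formula \eqref{AhatIM} of \Cref{mainAhatIM}, and then to analyze the hypersurface case directly via Brooks's closed formula \eqref{Ahathypersurface}. First I would observe that when $r=1$, the manifold $X_{2n}(d_1)$ is spin precisely when $c_1=2n+2-d_1$ is even, i.e.\ $d_1$ is even, say $d_1=2m$; then \eqref{Ahathypersurface} gives
\[
\hat{A}(X_{2n}(2m))=\frac{2}{(2n+1)!}\prod_{j=-n}^{n}(m-j)=\frac{2}{(2n+1)!}\prod_{j=-n}^{n}(m-j).
\]
This product vanishes exactly when one of the factors $m-j$ is zero for some $j\in\{-n,\dots,n\}$, i.e.\ when $-n\leqslant m\leqslant n$, which translates to $2-2n\leqslant c_1\leqslant 2+2n$; combined with the parity constraint and the fact that $c_1$ is even, the condition $c_1>0$ becomes $c_1\in\{2,4,\dots\}$. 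One checks that for a genuine hypersurface ($d_1\geqslant 1$) with $c_1>0$ one has $0<m\leqslant n$, so a factor vanishes and $\hat{A}=0$; while if $c_1\leqslant 0$ then $m\geqslant n+1$, every factor $m-j$ with $j\in\{-n,\dots,n\}$ is strictly positive, and the product is positive. This settles the base case.

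Next I would run the induction on $r$ using \eqref{AhatIM}. Given $\underline{d}=(d_1,\dots,d_r)$ with, say, $d_{r-1}\geqslant d_r\geqslant 2$ (reorder if necessary; the case $d_r=1$ drops a coordinate and reduces $r$ immediately), \Cref{mainAhatIM} writes
\[
\hat{A}(X_{2n}(d_1,\dots,d_{r-1},d_r))=\sum_{k=0}^{d_r-1}\hat{A}(X_{2n}(d_1,\dots,d_{r-2},d_{r-1}+d_r-1-2k))
\]
as a sum of $d_r$ terms, each an $\hat{A}$-genus of a complete intersection of one fewer multi-degree entry. The key point is that every summand has the \emph{same} $c_1$: replacing $d_{r-1},d_r$ by the single degree $d_{r-1}+d_r-1-2k$ changes $r\mapsto r-1$ and $\sum d_i\mapsto \sum d_i -2k-1+1=\sum d_i-2k-(d_r-1)+\cdots$ — more precisely one computes directly that $c_1' = (2n+(r-1)+1-(\sum_{i\le r-2}d_i + d_{r-1}+d_r-1-2k)) = c_1+2k$, so as $k$ ranges over $0,\dots,d_r-1$ the first Chern numbers of the summands are $c_1, c_1+2, \dots, c_1+2(d_r-1)$, all of the same parity as $c_1$ and all $\geqslant c_1$. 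Hence if $c_1>0$ for the original, then $c_1'>0$ for each summand; by the induction hypothesis (noting each summand is a spin complete intersection with one fewer degree, possibly with some entry equal to $1$, which harmlessly further reduces $r$, or even equal to $0$, i.e.\ $X_{2n}(\dots,0)=\cp^{\,\cdot}$-type with vanishing $\hat A$ since then $c_1$ is large — this degenerate case needs a separate easy check) each summand vanishes, so the whole sum vanishes, proving (1). For (2): if $c_1\leqslant 0$, then $c_1$ is the smallest of the summand Chern numbers; I would argue that the $k=0$ term, namely $\hat{A}(X_{2n}(d_1,\dots,d_{r-2},d_{r-1}+d_r-1))$, already has $c_1'=c_1\leqslant 0$ and is positive by induction, while \emph{all other summands are $\geqslant 0$} (they have $c_1'=c_1+2k$ which may be positive, giving $0$, or still $\leqslant 0$, giving a positive value again by induction); thus the total is strictly positive.

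The main obstacle I anticipate is bookkeeping around the degenerate reductions: when an entry $d_{r-1}+d_r-1-2k$ becomes $1$ one must invoke $\hat{A}(X_{2n}(\dots,1))=\hat{A}(X_{2n}(\dots))$ to legitimately lower $r$, and when it becomes $0$ (which can only happen if $d_{r-1}=d_r$ is... actually $d_{r-1}+d_r-1$ is odd so it is never $0$ when $k$ is an integer unless... $d_{r-1}+d_r-1-2k=0$ forces $d_{r-1}+d_r$ odd, impossible to avoid in general) or negative, one needs either to show such a term simply does not arise under the ordering $d_{r-1}\geqslant d_r\geqslant 2$ (indeed $d_{r-1}+d_r-1-2k\geqslant d_{r-1}+d_r-1-2(d_r-1)=d_{r-1}-d_r+1\geqslant 1$, so every entry stays $\geqslant 1$, which is exactly what makes the induction clean) or to handle it separately. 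Verifying that inequality $d_{r-1}-d_r+1\geqslant 1$ carefully is what guarantees we never leave the class of honest complete intersections with positive degrees, and it is the linchpin that lets the induction close; everything else is the base-case residue/product computation already recorded above and a routine parity check on $c_1$.
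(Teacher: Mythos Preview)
Your proposal is correct and follows essentially the same approach as the paper: induction on $r$ via the iterated formula \eqref{AhatIM}, with the hypersurface case \eqref{Ahathypersurface} as base, and the key observation that the $k$-th summand has first Chern number $c_1+2k$. Your write-up is in fact a bit more careful than the paper's---you explicitly verify that under the ordering $d_{r-1}\geqslant d_r\geqslant 2$ every new degree $d_{r-1}+d_r-1-2k$ stays $\geqslant 1$, and you spell out why the remaining summands in case~(2) are nonnegative---points the paper leaves implicit.
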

\begin{proof}
Since the diffeomorphism type of $X_{2n}(\underline{d})$ is independent of the order of the degrees $d_1,\dots,d_{r-1},d_r$, without losing generality, we assume that $d_{r-1}\geqslant d_r$. 

We prove the theorem by induction on $r$.

For $r=1$, $\hat{A}(X_{2n}(d_1))=\frac{2}{(2n+1)!}\prod\limits_{j=-n}^n\left(\frac{d_1}{2}-j\right)$, $c_1=2n+2-d_1$, and this case is trivial.

For $r>1$, if $c_1>0$, then
\begin{align*}
& 2n+r+1-\sum_{i=1}^r{d_i}=2n+(r-1)+1-\left(\sum_{i=1}^{r-2}{d_i}+(d_{r-1}+d_r-1)\right)>0,\\
& 2n+(r-1)+1-\left(\sum_{i=1}^{r-2}{d_i}+(d_{r-1}+d_r-1-2k)\right)>0, 0\leqslant k\leqslant d_r-1.
\end{align*}
Thus, by induction hypothesis,
$$
\hat{A}(X_{2n}(d_1,\dots,d_{r-2},d_{r-1}+d_r-1-2k))=0, ~ 0\leqslant k\leqslant d_r-1,
$$
i.e. the right side of \eqref{AhatIM} is zero, then $\hat{A}(X_{2n}(d_1,\dots,d_r))=0$;

Conversely, if $c_1\leqslant0$, then
\begin{equation*}
2n+r+1-\sum_{i=1}^r{d_i}=2n+(r-1)+1-\left(\sum_{i=1}^{r-2}{d_i}+(d_{r-1}+d_r-1)\right)\leqslant0.
\end{equation*}
Thus, by induction hypothesis,
$$\hat{A}(X_{2n}(d_1,\dots,d_{r-2},d_{r-1}+d_r-1))>0,$$
and all other summation terms in \eqref{AhatIM} are nonnegative, so
$\hat{A}(X_{2n}(d_1,\dots,d_r))>0$.
\end{proof}

\section{Calculation of classical Hirzebruch genera of complete intersections}\label{sec-Hir-genus}

In this section, for the power series $Q(x)=x/R(x)$, we discuss the associated Hirzebruch genus $\varphi^{}_Q$ of complete intersections $X_n(\underline{d})$ with multi-degree $\underline{d}=(d_1,\dots,d_r)$. Then for certain given power series $Q(x)$, we calculate the associated classical Hirzebruch genera of complete intersections as examples.

\begin{theorem}\label{HGCI}
The Hirzebruch genus $\varphi^{}_Q$ on the complete intersection $X_n(\underline{d})$ satisfies that:
\begin{equation*}
\varphi^{}_Q(X_{n}(\underline{d}))=\frac{1}{2\pi\sqrt{-1}}\oint_{\varGamma(0)}\frac{\prod_{i=1}^r{R(d_iz)}}{{(R(z))}^{n+r+1}}d z.
\end{equation*}
\end{theorem}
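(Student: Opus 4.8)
The plan is to derive the formula from the virtual Hirzebruch genus machinery of \Cref{virtualgenus}, using $\cp^{n+r}$ as the ambient manifold. By \Cref{VRKCI} the complete intersection $X_n(\underline{d})$ represents the virtual submanifold $(d_1x,\dots,d_rx)$ of $\cp^{n+r}$, so that, combining with \Cref{VK},
\[
\varphi^{}_Q(X_n(\underline{d}))=\varphi^{}_Q(d_1x,\dots,d_rx)_{\cp^{n+r}}=\int_{\cp^{n+r}}\prod_{i=1}^rR(d_ix)\cdot\varphi^{}_Q(T\cp^{n+r}).
\]
The next step is to evaluate $\varphi^{}_Q(T\cp^{n+r})$. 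From the standard isomorphism $T\cp^{m}\oplus\underline{\mathbb{C}}\cong H^{\oplus(m+1)}$ one gets $c(T\cp^{n+r})=(1+x)^{n+r+1}$, so all formal Chern roots equal $x$ and hence $\varphi^{}_Q(T\cp^{n+r})=Q(x)^{n+r+1}=x^{n+r+1}/R(x)^{n+r+1}$; since the hypothesis $Q(0)=1$ forces $R(x)=x\cdot(\text{unit})$, the right-hand side is a genuine power series in $x$ with constant term $1$.

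Substituting, $\varphi^{}_Q(X_n(\underline{d}))$ equals the integral over $\cp^{n+r}$ of the top-degree component of the power series $x^{n+r+1}\prod_{i=1}^rR(d_ix)\big/R(x)^{n+r+1}$. Since $x\in H^2(\cp^{n+r};\zz)$ and $\int_{\cp^{n+r}}x^{n+r}=1$, this integral is simply the coefficient of $x^{n+r}$ in that series. Finally I would invoke the coefficient–residue dictionary already used throughout the paper: for a Laurent series $f$, the coefficient of $x^{n+r}$ in $x^{n+r+1}f(x)$ is the coefficient of $x^{-1}$ in $f(x)$, i.e. $\mathrm{Res}_{x=0}f(x)=\frac{1}{2\pi\sqrt{-1}}\oint_{\varGamma(0)}f(x)\,dx$. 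Applying this with $f(x)=\prod_{i=1}^rR(d_ix)/R(x)^{n+r+1}$ (a Laurent series, by the unit observation above) yields the asserted formula.

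I do not expect a serious obstacle: once \Cref{VRKCI} and \Cref{VK} are in hand the argument is essentially bookkeeping, and the only points needing care are (i) verifying that $R(x)/x$ is a unit so that $Q(x)^{n+r+1}$ and $\prod_iR(d_ix)/R(x)^{n+r+1}$ are legitimate (Laurent) series and the residue is well defined; (ii) keeping track of degrees, since $x\in H^2$ and the fundamental class of $\cp^{n+r}$ pairs with $x^{n+r}$; and (iii) observing that although the computation is phrased for the complex genus, an even $Q$ yields a genus factoring through $\Omega^{U}\to\Omega^{SO}$ (as recalled at the end of \Cref{sec-genus}), so the oriented-genus interpretation needs no separate treatment. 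As an alternative that bypasses $\cp^{n+r}$ one could instead start from the normal bundle relation $c(TX_n(\underline{d}))\cdot\prod_i(1+d_ix)=(1+x)^{n+r+1}$, deduce $\varphi^{}_Q(TX_n(\underline{d}))=Q(x)^{n+r+1}/\prod_iQ(d_ix)$ by multiplicativity of the $\varphi^{}_Q$-class, and then run the same residue computation using $\int_{X_n(\underline{d})}x^n=d$; I would nonetheless present the virtual-genus version as the main argument, since it situates the theorem within the framework of \Cref{virtualgenus}.
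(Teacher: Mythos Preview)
Your proposal is correct and follows essentially the same route as the paper: invoke \Cref{VRKCI} and \Cref{VK} to express $\varphi^{}_Q(X_n(\underline{d}))$ as $\int_{\cp^{n+r}}\prod_iR(d_ix)\cdot Q(x)^{n+r+1}$, rewrite $Q(x)^{n+r+1}=x^{n+r+1}/R(x)^{n+r+1}$, and then pass from the coefficient of $x^{n+r}$ to a residue. The extra remarks you include (the unit check on $R(x)/x$, the oriented-versus-complex observation, and the alternative via the normal bundle of $X_n(\underline{d})$) are accurate but not present in the paper's proof.
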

\begin{proof} 
Let $x\in H^2(\cp^{n+r};\zz)$ be the first Chern class of the Hopf line bundle $H$ over $\cp^{n+r}$.
By \Cref{VRKCI} and \Cref{VK},
\begin{align*}
\varphi^{}_Q(X_{n}(\underline{d}))&= \varphi^{}_Q(d_1x,\dots,d_rx)_{\mathbb{C}P^{n+r}} \\
&= \int_{\mathbb{C}P^{n+r}}{\prod_{i=1}^rR(d_ix)\cdot \varphi^{}_Q(T(\cp^{n+r}))}.  
\end{align*}
The total Chern class of $\cp^{n+r}$ is $c(\cp^{n+r})=(1+x)^{n+r+1}$, so
\begin{equation*}
\varphi^{}_Q(T(\cp^{n+r}))=(Q(x))^{n+r+1}.
\end{equation*}
Thus
\begin{align*}
\varphi^{}_Q(X_{n}(\underline{d}))=& \int_{\mathbb{C}P^{n+r}}{\prod_{i=1}^rR(d_ix)\cdot (Q(x))^{n+r+1}} \\ 
=& \int_{\mathbb{C}P^{n+r}}x^{n+r+1}\frac{\prod_{i=1}^r{R(d_ix)}}{{(R(x))}^{n+r+1}}. 
\end{align*}
By residue theorem, 
\[
\varphi^{}_Q(X_{n}(\underline{d}))=\frac{1}{2\pi\sqrt{-1}}\oint_{\varGamma(0)}\frac{\prod_{i=1}^r{R(d_iz)}}{{(R(z))}^{n+r+1}}d z.  \qedhere
\]
\end{proof}

\begin{example}\label{Tygenus}
For $T_y$-genus (or generalized Todd genus) in \cite[page 93]{Hi1978}, the associated power series is   $Q(y;x)=\dfrac{x}{R(y;x)}$, where
\[
R(y;x)=\frac{\exp(x(y+1))-1}{\exp(x(y+1))+y}.
\]
Hence by \Cref{HGCI} we have
\begin{align}\label{TyCI}
&~~~~ T_y(X_{n}(\underline{d})) =\frac{1}{2\pi\sqrt{-1}}\oint_{\varGamma(0)}\frac{\prod_{j=1}^r{R(y;d_jz)}}{{(R(y;z))}^{n+r+1}}d z \nonumber\\
&= \frac{1}{2\pi\sqrt{-1}}\oint_{\varGamma(0)}{\prod_{j=1}^r{\frac{\exp(d_jz(y+1))-1}{\exp(d_jz(y+1))+y}}\cdot{\left(\frac{\exp(z(y+1))+y}{\exp(z(y+1))-1}\right)}^{n+r+1}}d z.
\end{align}
\end{example}
\begin{example}
In \Cref{Tygenus}, when $y=0$,  $R(y;x)$ is $R(0;x)=1-\exp(-x)$. The $T_0$-genus is the Todd genus, which has the following form:
\begin{align}\label{T1} 
{\rm Td}(X_{n}(\underline{d})) & =\frac{1}{2\pi\sqrt{-1}}\oint_{\varGamma(0)}\frac{\prod_{j=1}^r(1-\exp{(-d_jz)})}{{(1-\exp{(-z)})}^{n+r+1}}d z \nonumber\\
& =\frac{1}{2\pi\sqrt{-1}}\oint_{\varGamma(0)}\frac{\prod_{j=1}^r(1-\exp{(-d_jz)})}{{(1-\exp{(-z)})}^{n+r+1}}\cdot\exp(z)d(1-\exp{(-z)}) \nonumber\\
& =\frac{1}{2\pi\sqrt{-1}}\oint_{\varGamma(0)}\frac{\prod_{j=1}^r\left(1-(1-\omega)^{d_j}\right)}{\omega^{n+r+1}}\cdot(1-\omega)^{-1}d \omega \nonumber\\
& =\frac{1}{2\pi\sqrt{-1}}\oint_{\varGamma(0)}\frac{1}{\omega^{n+r+1}}\sum_{j=0}^r{{(-1)^{j}}\sum_{1\leqslant{k_1}<\dots<{k_j}\leqslant r}(1-\omega)^{-1+d_{k_1}+\cdots+d_{k_j}}}d \omega \nonumber\\
& =\sum_{j=0}^r{{(-1)^{j}}\sum_{1\leqslant{k_1}<\dots<{k_j}\leqslant r}(-1)^{n+r}\dbinom{-1 +d_{k_1}+\dots+d_{k_j}}{n+r}} \nonumber\\
& =\sum_{j=0}^r{{(-1)^{n+r+j}}\sum_{1\leqslant{k_1}<\dots<{k_j}\leqslant r}\dbinom{-1 +d_{k_1}+\dots+d_{k_j}}{n+r}}. 
\end{align}

Executing another process, we have the following result:
\begin{align}\label{T2}
{\rm Td}(X_{n}(\underline{d})) & =\frac{1}{2\pi\sqrt{-1}}\oint_{\varGamma(0)}\frac{\prod_{j=1}^r(1-\exp{(-d_jz)})}{{(1-\exp{(-z)})}^{n+r+1}}d z \nonumber\\
& =\frac{1}{2\pi\sqrt{-1}}\oint_{\varGamma(0)}\exp{(c_1z)}\frac{\prod_{j=1}^r(\exp{(d_jz)}-1)}{{(\exp{(z)}-1)}^{n+r+1}}d z \nonumber\\
& =\frac{1}{2\pi\sqrt{-1}}\oint_{\varGamma(0)}\exp{(c_1z)}\frac{\prod_{j=1}^r(\exp{(d_jz)}-1)}{{(\exp{(z)}-1)}^{n+r+1}}\cdot\exp(-z)d(\exp{(z)}-1) \nonumber\\\
& =\frac{1}{2\pi\sqrt{-1}}\oint_{\varGamma(0)}\frac{\prod_{j=1}^r\left((1+\omega)^{d_j}-1\right)}{\omega^{n+r+1}}\cdot(1+\omega)^{c_1-1}d \omega \nonumber\\\
& =\sum_{j=0}^r{{(-1)^{r-j}}\sum_{1\leqslant{k_1}<\dots<{k_j}\leqslant r}\dbinom{c_1-1 +d_{k_1}+\dots+d_{k_j}}{n+r}}, 
\end{align}
where $c_1=n+r+1-\sum\limits_{i=1}^rd_i$. 

In fact, \eqref{T1} and \eqref{T2} are equivalent.
Since $\dbinom{a}{k}$ is a generalized binomial coefficients coventioned as in \Cref{Ahatgenus}, it is easy to know $(-1)^k\dbinom{a}{k}=\dbinom{-a+k-1}{k}$,  then we have
\begin{align*}
& ~~~~~(-1)^{n+r}\cdot\dbinom{-1+d_{k_1}+\cdots+d_{k_j}}{n+r} \\
&= \dbinom{n+r-1+1-(d_{k_1}+\cdots+d_{k_j})}{n+r}\\
&= \dbinom{(n+r+1-\sum_{i=1}^rd_i)-1+\sum_{i=1}^rd_i-(d_{k_1}+\cdots+d_{k_j})}{n+r}\\
&= \dbinom{c_1-1+(\sum_{i=1}^rd_i-(d_{k_1}+\cdots+d_{k_j}))}{n+r}.
\end{align*}
So \eqref{T1} implies that 
\begin{align*}
{\rm Td}(X_{n}(\underline{d}))& =\sum_{j=0}^r{{(-1)^{j}}\sum_{1\leqslant{k_1}<\dots<{k_j}\leqslant r}(-1)^{n+r}\dbinom{-1 +d_{k_1}+\dots+d_{k_j}}{n+r}} \\
& =\sum_{j=0}^r{{(-1)^{j}}\sum_{1\leqslant{k_1}<\dots<{k_j}\leqslant r}\dbinom{c_1-1+(\sum_{i=1}^rd_i-(d_{k_1}+\cdots+d_{k_j}))}{n+r}} \\
& =\sum_{j=0}^r{{(-1)^{r-j}}\sum_{1\leqslant{k_1}<\dots<{k_j}\leqslant r}\dbinom{c_1-1 +d_{k_1}+\dots+d_{k_j}}{n+r}}.
\end{align*}
Thus, the two forms \eqref{T1} and \eqref{T2} about Todd genus of $X_n(\underline{d})$ are equivalent.
\end{example}
\begin{example}
In \Cref{Tygenus}, when $y=-1$, $R(y;x)$ is
\begin{align*}
R(-1;x)& =\lim_{y\to-1}\frac{\exp(x(y+1))-1}{\exp(x(y+1))+y}\\ &=\lim_{y\to-1}\frac{x\exp(x(y+1))}{x\exp(x(y+1))+1}\\
&=\frac{x}{1+x},
\end{align*}
and \eqref{TyCI} is the Euler characteristic of complete intersection $X_{n}(\underline{d})$:
\[
\chi(X_{n}(\underline{d}))=\frac{1}{2\pi\sqrt{-1}}\oint_{\varGamma(0)}\frac{\prod_{j=1}^r\frac{d_jz}{1+d_jz}}{\left(\frac{z}{1+z}\right)^{n+r+1}}d z .
\]
Let $\omega=\dfrac{z}{1+z}$, then $z=\dfrac{\omega}{1-\omega}$. It implies that 
\begin{align*}
\dfrac{d_jz}{1+d_jz} &= \dfrac{d_j\omega}{1+(d_j-1)\omega},\\
d \omega &= (1-\omega)^2d z ~(\text{the differential of $\omega$}).
\end{align*}
 Hence,
\begin{align*}
\chi(X_{n}(\underline{d}))& =\frac{1}{2\pi\sqrt{-1}}\oint_{\varGamma(0)}\prod_{j=1}^r\frac{d_j\omega}{1+(d_j-1)\omega}\cdot\frac{1}{{\omega}^{n+r+1}}\cdot\frac{1}{{(1-\omega)}^{2}}d \omega \\
& =\prod_{j=1}^rd_j\cdot\frac{1}{2\pi\sqrt{-1}}\oint_{\varGamma(0)}\frac{1}{\prod_{j=1}^r{(1+(d_j-1)\omega)}\cdot{(1-\omega)}^{2}}\cdot\frac{1}{{\omega}^{n+1}}d \omega \\
& =\prod_{j=1}^rd_j\cdot h_n(1,1,1-d_1,\dots,1-d_r), 
\end{align*}
where $h_{n}(a_{1},\ldots, a_{k})=\sum\limits_{1 \leqslant i_{1}\leqslant\cdots\leqslant i_{n} \leqslant k} a_{i_{1}} a_{i_{2}} \cdots a_{i_{n}}$ is the coefficient of $z^{n}$ in $\prod\limits_{j=1}^{k} \dfrac{1}{1-a_{j} z}$, and $h_{n}(a_{1},\ldots, a_{k})$ is called a complete homogeneous symmetric polynomial of degree $n$ on indeterminates $a_{1}, \ldots, a_{k}$.
\end{example}
\begin{example}
In \Cref{Tygenus}, when $y=1$, $R(y;x)$ is  $R(1;x)=\tanh x$ and \eqref{TyCI} is the corresponding signature  of complete intersection $X_{n}(\underline{d})$.
\begin{align*}
&~~~~~ \tau(X_{n}(\underline{d})) \\
&= \frac{1}{2\pi\sqrt{-1}}\oint_{\varGamma(0)}\frac{\prod_{j=1}^r\tanh(d_jz)}{{(\tanh z)}^{n+r+1}}d z \\
&= \frac{1}{2\pi\sqrt{-1}}\oint_{\varGamma(0)}{\prod_{j=1}^r\frac{\exp{(2d_jz)}-1}{\exp{(2d_jz)}+1}}\cdot{{\left(\frac{\exp{(2z)}+1}{\exp{(2z)}-1}\right)}^{n+r+1}}d z \\
&= \frac{1}{2\pi\sqrt{-1}}\oint_{\varGamma(0)}{\prod_{j=1}^r\frac{\exp{(2d_jz)}-1}{\exp{(2d_jz)}+1}}\cdot{{\left(\frac{\exp{(2z)}+1}{\exp{(2z)}-1}\right)}^{n+r+1}}\cdot \frac{\exp(-2z)}{2}d (\exp(2z)-1) \\
&= \frac{1}{2\pi\sqrt{-1}}\oint_{\varGamma(0)}{\prod_{j=1}^r\frac{(1+\omega)^{d_j}-1}{(1+\omega)^{d_j}+1}}\cdot{{\left(\frac{2+\omega}{\omega}\right)}^{n+r+1}}\cdot\frac{1}{2(1+\omega)}d \omega.
\end{align*}
\end{example}
\begin{example}
For Witten genus \cite[page 82]{HiBeJu1992}, the power series is that  
\[
Q(x)=\dfrac{x}{R(L;x)}=\dfrac{x}{\sigma_L(x)}, 
\]
where $R(L;x)=\sigma_L(x)$ is the Weierstrass $\sigma$-function for Lattice $L$.
Hence, by \Cref{HGCI}, the Witten genus of $X_n(\underline{d})$ is
\begin{equation*}
W(X_{n}(\underline{d})) =\frac{1}{2\pi\sqrt{-1}}\oint_{\varGamma(0)}\frac{\prod_{j=1}^r{\sigma_L(d_jz)}}{{(\sigma_L(z))}^{n+r+1}}d z.
\end{equation*}
\end{example}
\begin{example}
For $\hat{A}$-genus, the power series corresponding to $\hat{A}$-genus is
\begin{equation*}
Q(x)=\frac{x}{R(x)}=\frac{\frac{1}{2}x}{\sinh\left(\frac{1}{2}x\right)}, \text{ where }
R(x)=2\sinh\left(\frac{1}{2}x\right).
\end{equation*}
Then by \Cref{HGCI},
\begin{align*}
&~~~~~ \hat{A}(X_{2n}(d_1,\dots,d_r)) \\
&= \frac{1}{2\pi\sqrt{-1}}\oint_{\varGamma(0)}\frac{\prod_{j=1}^r\left(2\sinh\left(\frac{1}{2}d_jz\right)\right)}{{\left(2\sinh\left(\frac{1}{2}z\right)\right)}^{2n+r+1}}d z \\
&= \frac{1}{2\pi\sqrt{-1}}\oint_{\varGamma(0)}\frac{\prod_{j=1}^r\left(\exp{\left(\frac{1}{2}d_jz\right)}-\exp{\left(-\frac{1}{2}d_jz\right)}\right)}{{\left(\exp{\left(\frac{1}{2}z\right)}-\exp{\left(-\frac{1}{2}z\right)}\right)}^{2n+r+1}}d z \\
&= \frac{1}{2\pi\sqrt{-1}}\oint_{\varGamma(0)}\exp{\left(\frac{1}{2}\big(2n+r+1-\sum_{i=1}^rd_i\big)z\right)}\cdot\frac{\prod_{j=1}^r(\exp{(d_jz)}-1)}{{(\exp{(z)}-1)}^{2n+r+1}}d z \\
&= \frac{1}{2\pi\sqrt{-1}}\oint_{\varGamma(0)}\exp{\left(\frac{1}{2}c_1z\right)}\cdot\frac{\prod_{j=1}^r(\exp{(d_jz)}-1)}{{(\exp{(z)}-1)}^{2n+r+1}}\cdot\exp{(-z)}d(\exp{(z)}-1) \\
&= \frac{1}{2\pi\sqrt{-1}}\oint_{\varGamma(0)}(1+\omega)^{\frac{1}{2}c_1-1}\cdot\frac{\prod_{j=1}^r\left((1+\omega)^{d_j}-1\right)}{\omega^{2n+r+1}}d \omega \\
&= \sum_{j=0}^r{{(-1)^{r-j}}\sum_{1\leqslant{k_1}<\dots<{k_j}\leqslant r}\dbinom{\frac{1}{2}c_1-1 +d_{k_1}+\dots+d_{k_j}}{2n+r}}, 
\end{align*}
where $c_1=2n+r+1-\sum\limits_{i=1}^rd_i$.
\end{example}
\begin{example}In \cite{Kricever1976}, Kri$\check{c}$ever proved that the values of the Hirzebruch genera $A_k$, $k= 2, 3, \dots,$ are obstructions to the existence of nontrivial
$S^1$-actions on a unitary manifold whose first Chern class is divisible by $k$.
For Hirzebruch genera $A_k$ in \cite[\S 2]{Kricever1976}, the associated power series is
\begin{equation*}\label{AkRx}
Q(x)=\frac{x}{R(x)}=\frac{kx\exp(x)}{\exp(kx)-1},  k= 2, 3, \dots,
\end{equation*}
where $R(x)=\dfrac{\exp(kx)-1}{k\exp(x)}$.

Then by \Cref{HGCI},
\begingroup
\allowdisplaybreaks
\begin{align*}
&~~~~~ A_k(X_{n}(\underline{d}))=\frac{1}{2\pi\sqrt{-1}}\oint_{\varGamma(0)}\frac{\prod_{i=1}^r\frac{\exp{(kd_iz)}-1}{k\exp(d_iz)}}{{\left(\frac{\exp{(kz)}-1}{k\exp(z)}\right)}^{n+r+1}}d z \\
&= \frac{1}{k}\cdot \frac{1}{2\pi\sqrt{-1}}\oint_{\varGamma(0)}{{\prod_{i=1}^r\frac{\exp{(d_iz)}-1}{k\exp{\left(\frac{1}{k}d_iz\right)}}}\cdot{\left(\frac{k\exp{\left(\frac{1}{k}z\right)}}{\exp{(z)}-1}\right)^{n+r+1}}}d z \\
&= \frac{1}{k}\cdot \frac{1}{2\pi\sqrt{-1}}\oint_{\varGamma(0)}{{\prod_{i=1}^r\frac{\exp{(d_iz)}-1}{k\exp{\left(\frac{1}{k}d_iz\right)}}}\cdot{\left(\frac{k\exp{\left(\frac{1}{k}z\right)}}{\exp{(z)}-1}\right)^{n+r+1}}}\exp(-z)d(\exp{(z)}-1) \\
&= \frac{1}{k}\cdot \frac{1}{2\pi\sqrt{-1}}\oint_{\varGamma(0)}{{\prod_{i=1}^r\frac{(1+\omega)^{d_i}-1}{k(1+\omega)^{\frac{1}{k}d_i}}}\cdot{\left(\frac{k(1+\omega)^{\frac{1}{k}}}{\omega}\right)^{n+r+1}}\cdot{(1+\omega)}^{-1}}d \omega \\
&= k^n\cdot \frac{1}{2\pi\sqrt{-1}}\oint_{\varGamma(0)}{\frac{\prod_{i=1}^r((1+\omega)^{d_i}-1)}{\omega^{n+r+1}}\cdot{(1+\omega)}^{\frac{1}{k}\left(n+r+1-\sum_id_i\right)-1}}d \omega \\
&= k^n\cdot \frac{1}{2\pi\sqrt{-1}}\oint_{\varGamma(0)}{\frac{\prod_{i=1}^r((1+\omega)^{d_i}-1)}{\omega^{n+r+1}}\cdot{(1+\omega)}^{\frac{1}{k}c_1-1}}d \omega \\
&= k^n\sum_{j=0}^r{(-1)^{r-j}\sum_{1\leqslant k_1<\dots<k_j\leqslant r}\dbinom{\frac{1}{k}c_1-1+d_{k_1}+\dots+d_{k_j}}{n+r}}, 
\end{align*}
\endgroup
where $c_1=n+r+1-\sum\limits_{i=1}^rd_i$. Note that $A_1(X_{n}(\underline{d}))$ is the Todd genus ${\rm Td}(X_{n}(\underline{d}))$,  and  $A_2(X_{n}(\underline{d}))$ coincides with the $\hat{A}$-genus $\hat{A}(X_{n}(\underline{d}))$  up to a factor $2^n$.
\end{example}

\end{document}